\newtheorem{theorem}{Theorem}[section]
 \newtheorem{corollary}[theorem]{Corollary}
 \newtheorem{lemma}[theorem]{Lemma}
 \newtheorem{proposition}[theorem]{Proposition}
 \theoremstyle{definition}
 \newtheorem{definition}[theorem]{Definition}
 \theoremstyle{remark}
 \newtheorem{remark}[theorem]{Remark}
 \numberwithin{equation}{section}
\def\bpf{\begin{proof}}
\def\epf{\end{proof}}
\def\be{\begin{equation}}
\def\ee{\end{equation}}
\def\bea{\begin{eqnarray}}
\def\eea{\end{eqnarray}}
\def\bt{\begin{theorem}}
\def\et{\end{theorem}}
\def\bl{\begin{lemma}}
\def\el{\end{lemma}}
\def\br{\begin{remark}}
\def\er{\end{remark}}
\def\bc{\begin{corollary}}
\def\ec{\end{corollary}}
\def\bd{\begin{definition}}
\def\ed{\end{definition}}
\def\bp{\begin{proposition}}
\def\ep{\end{proposition}}
\begin{document}
\title{Extremal function for a sharp Moser-Trudinger type inequality on the upper half space
\footnote{Keywords: Moser-Trudinger inequalities, Optimal constant, Extremal functions, Euler Lagrange equation.}}

\author{Yubo Ni}
\date{}
\maketitle
\begin{abstract}
We establish a sharp Moser-Trudinger type inequality on the upper half space for two dimensions.~Then we investigate the existence of the extremal functions for this sharp Moser-Trudinger type inequality under dynamic changes in the unit ball. Additionally, we derive a logarithmic inequality named weighted Euclidean Onofri inequality and another Trudinger type inequality.
\vskip 2mm
2010 Mathematics Subject Classification: 35A15, 35A23.
 \end{abstract}

\section{Introduction}
In this paper we establish a sharp Moser-Trudinger type inequality in~${\mathbb{R}}^{2}$~with partial monomial weight~$t^{\alpha}$,~$t^{\beta}$.~Then we study the existence of the extremal functions for this sharp Moser-Trudinger type inequality under dynamic changes in the unit ball.~For inequality with monomial weight,~there was a significant study described below.~The open question raised by Brezis~\cite{BI2003,BB2003}~about the problem of the regularity of stable solutions to reaction--diffusion problems of double revolution was considered.~This problem was studied by Cabr\'{e} and Ros-Oton~\cite{CR2013}~and then they established the Sobolev,~Morrey,~Trudinger~and isoperimetric inequalities with monomial weight~\cite{CS2013}.~Bakry,~Gentil~and Ledoux~\cite{B2013}~proved the Sobolev inequality with monomial weight,~and moreover,~the best constant~$S(n,a)$~was also calculated.~Nguyen~\cite{N2015}~proved the same result again using the mass transport approach,~thereby extending the above result.~Furthermore,~he also studied the best constants and extremal functions for the Gagliardo-Nirenberg inequalities and logarithmic Sobolev inequalities with the weight~$X_{n}^{a}$~for an arbitrary norm.~Cabr\'{e}~and~Ros-Oton~considered the borderline cases of Sobolev inequalities;~therefore,~they established the monomial Trudinger inequality~\cite{CS2013}.~Lam was inspired by Brezis~\cite{BI2003,BB2003},~Cabr\'{e}~and Ros-Oton~\cite{CR2013}.~Then the sharp Moser-Trudinger inequality with monomial weight~$X^{A}$~was established.~However,~are there some extremal functions for this sharp Moser-Trudinger inequality with monomial weight?~If we figure out the existence problem of the optimal constant and extremal function,~we can then study the corresponding problem of prescribing the Gaussian curvature~\cite{H1990}~and prove the existence of solutions to mean-field equations~\cite{C2010}.~This is of great significance for the development of geometric analysis~\cite{Y2016}\cite{Z2019}.~This paper devise ideas with the aim of studying this problem.

The main difficulty is that the lack of compactness of the Moser-Trudinger embedding prevented the maximizing sequences from converging.~Through symmetrization arguments,~the functions could be transformed into radial functions~and sequences lacking compactness were concentrated at the origin.~For the domain as a ball in~$\mathbb{R}^{n}$,~Carleson and Chang estimated the supremum of the functionals of all maximal sequences and constructed a test function whose functional exceeded this supremum.~Therefore,~it has been shown that a convergent maximum sequence existed.~Subsequently,~Struwe~\cite{S1988}~established the results for bounded domains that were close to the balls.~Flucher~\cite{F1992}~proved the result for a bounded domain in~$\mathbb{R}^{2}$~and the results were further extended to higher dimensions~\cite{L1996}.~Adimurthi~\cite{A2004},~Struwe~\cite{A2000}~and Li~\cite{L2001}~developed a blow-up analysis method to solve this problem.~In recent years,~more researches on the existence of minimizers of Moser-Trudinger inequalities have been reported in~\cite{I2019},~\cite{L2022}~and~\cite{N2022}.~Bubbling nodal solutions of the Moser-Trudinger equation have also been studied~\cite{G2021}.~For our purpose,~we will establishes concentration-compactness principle which can be traced back to Lions~\cite{L1985}~who proved the concentration-compactness principle for the classical Moser-Trudinger inequality.~Then we will prove the existence of minimizers of a weighted Moser-Trudinger inequality in the two-dimensional upper half space under dynamic changes.~For the existence of minimizers of a weighted Moser-Trudinger inequality,~we studied the concepts from Carleson and Chang~\cite{C1986}~and the techniques of Roy~\cite{R2016}~for the existence of minimizers of the logarithmic weighted Moser-Trudinger inequality.~

To achieve our purpose,~we first construct a sharp weighted Moser-Trudinger inequality.~The Trudinger inequality can be traced back to Trudinger's work in 1967~\cite{T1967}.~Let~$\Omega$~be a bounded domain in space~$\mathbb{\mathbb{R}}^{n}$.~The classical Sobolev continuous embedding be asserted as
~$W_{0}^{1,p}\hookrightarrow L^{q}(\Omega)$~\cite{G1977}~for a positive integer~$n$,~$1\leq p\leq p^{*}=\frac{np}{n-p}$~and~$1\leq q\leq p^{*}$.~The Moser-Trudinger inequalities concern borderline cases
\begin{align*}
p=n,~for~which~formally~1\leq q\leq p^{*}=\infty,
\end{align*}
which leads to the question as to whether
\begin{align*}
W_{0}^{1,p}(\Omega)\subset L^{\infty}(\Omega).
\end{align*}
We can conclude the answer is no and simple examples can be found in~\cite{E2010}.~However,~Trudinger proved that~$W^{1,n}(\Omega)\subseteq L_{\varphi_{n}}(\Omega)$~is continuously embedded into the Orlicz space~$L_{\phi^{*}}(\Omega)$~with growth function~$\phi(t)=e^{{|t|}^{\frac{N}{N-1}}}-1$,~where~$\Omega\subseteq R^{n}$~is a bounded domain and~$L_{\phi}(\Omega)$~is defined as the set of all functions~$u$~defined on~$\Omega$~satisfying~$\displaystyle{\int_{\Omega}\phi(u(x))dx<\infty}$.~The Orlicz space~$L_{\phi^{*}}(\Omega)$~is the linear hull of ~$L_{\phi}(\Omega)$~in the Luxembourg norm
\begin{align*}
\displaystyle{{\|u\|}_{L_{\phi^{*}}(\Omega)}=\inf\left\{\lambda>0:\int_{\Omega}\phi(\frac{u(x)}{\lambda})dx\leq1\right\}}.
\end{align*}
Trudinger~\cite{T1967}~proved the inequality through an expansion of the power series of the exponential function.~Let~${\|\nabla u\|}_{n}=\displaystyle{\left(\int_{\Omega}{|\nabla u|}^{n}dx\right)}^{\frac{1}{n}}$,~then there exists two constants~$\alpha>0$~and~$c_{0}>0$~such that
\begin{equation*}
\frac{1}{|\Omega|}\displaystyle{\int_{\Omega}e^{\left(\alpha|u|^{\frac{n}{n-1}}\right)}dx\leq c_{0}},
\end{equation*}
for any~$u\in W_{0}^{1,n}(\Omega)$~which satisfies~$\|\nabla u\|_{n}\leq 1$.~Subsequently,~Moser~\cite{M1971}~proved the sharp form of the Trudinger inequality in a bounded domain.~Let~$\omega_{n-1}$~be the~$(n-1)$~dimensional surface of the unit sphere.~Then there exists a constant~$c_{0}$~that depends only on~$n$~such that for any~$u\in C^{1}_{c}(\Omega)$~and~$\alpha\leq\alpha_{n}=n{\omega_{n-1}}^{\frac{1}{n-1}}$,~
\begin{equation*}
\sup\limits_{\int_{\Omega}|\nabla u|^{n}dx\leq1}\displaystyle{\int_{\Omega}e^{\alpha|u|^{\frac{n}{n-1}}}}dx\leq c_{0}|\Omega|.
\end{equation*}
The integral on the left actually is finite for any positive~$\alpha$.~But if~$\alpha>\alpha_{n}$,~it can be made arbitrarily large by an appropriate choice of~$u$.~Moser further proved the following inequality in a two-dimensional sphere.~If~$u$~is a smooth function defined on~$S^{2}$~satisfying~$\displaystyle{\int_{\rm{s}^{2}}{|\nabla u|}^{2}}\\$${d\mu\leq1}$~and~$\displaystyle{\int_{\rm{s}^{2}}ud\mu=0}$~where~d$\mu$~denotes sphere area element,~then there exists a constant~$c$~such that
\begin{align*}
\int_{\rm{s}^{2}}e^{4\pi u^{2}}d\mu\leq c.
\end{align*}
We observed some works about Divergent operator with degeneracy and related sharp inequalities.~Dou,~Sun,~Wang and Zhu~\cite{D2022}~studied the following equation:
\begin{align*}
-\rm{div}(t^{\alpha}\nabla u)=t^{\beta}{|u|}^{p-1}.
\end{align*}
They provided direct proof of inequality as
\begin{align*}
\displaystyle{{\left(\int_{\mathbb{R}_{+}^{n+1}}t^{l}{|u|}^{\frac{n+l+1}{n+k}}dydt\right)}^{\frac{n+k}{n+l+1}}\leq C_{n,k}\int_{\mathbb{R}_{+}^{n+1}}t^{k}|\nabla u|dydt},
\end{align*}
for~$u\in C_{0}^{\infty}(\overline{\mathbb{\mathbb{R}}_{+}^{n+1}})$,~$\mathbb{\mathbb{R}}_{+}^{n+1}=\{(y,t)\in\mathbb{ \mathbb{R}}^{n+1}:t>0\}$,~$n\geq1$,~$l>-1$,~$k>0$~and~$\frac{nl}{n+1}\leq k\leq l+1$.~This is incontrovertible even for~$k\leq0$~which can be seen in~Maz'ya~\cite{M2011}.~The constant can be obtained for the inequality
\begin{align}\label{1.1}
\displaystyle{{\left(\int_{\mathbb{R}_{+}^{n+1}}t^{\beta}{|u|}^{p^{*}}dydt\right)}^{\frac{2}{p^{*}}}\leq c_{1,\alpha,\beta}\int_{\mathbb{R}_{+}^{n+1}}t^{\alpha}{|\nabla u|}^{2}dydt},
\end{align}
where~$u\in D_{\alpha}^{1,2}(\mathbb{R}_{+}^{n+1})$~is the completion of the space~$C_{0}^{\infty}(\overline{\mathbb{R}_{+}^{n+1}})$~under the norm~$\|u\|_{D_{\alpha}^{1,p}}(R_{+}^{n+1})$$={\left(\displaystyle{\int_{R_{+}^{n+1}}t^{\alpha}{|\nabla u|}^{p}}dydt\right)}^{\frac{1}{p}}$.~Dou,~Sun,~Wang~and Zhu~\cite{D2022}~proved that a constant could be reached by a nonnegative extremal function.~

Inspired by the weighted Sobolev inequality~\eqref{1.1},~we considers the case of the exponential end of the weighted Sobolev inequality,~that is~$p_{*}=\frac{2(n+\beta+1)}{n+\alpha-1}$,~$n+\alpha\rightarrow1$.~Therefore we obtain a new weighted Moser-Trudinger inequality as follows:

Here,~$c_{\alpha}=\displaystyle{\int_{0}^{\pi}(\sin\theta)^{\alpha}d\theta}$,~$c_{\beta}=\displaystyle{\int_{0}^{\pi}(\sin\theta)^{\beta}d\theta}$~and~$m_{\beta}(E)=\displaystyle{\int_{E}t^{\beta}dx}$.~
\begin{theorem}\label{theorem1.1}
Let~$\alpha,\beta>-1$~and~let~$\Omega\subset \mathbb{\mathbb{\mathbb{R}}}_+^2$~be a bounded domain.~Then there exists a constant~$c_{0}=c_{0}(\alpha,\beta)$~such that for each~$u\in C_c^\infty(\Omega)$~with~$\displaystyle{\int_{\Omega}|\nabla u|^{2+\alpha}t^{\alpha}}$dxdt~$\leq1$,
\begin{align}
\displaystyle{\int_{\Omega}\left(e^{a_{\alpha,\beta}{|u|}^{\frac{2+\alpha}{1+\alpha}}}-1\right)t^{\beta}dxdt\leq (2+\beta)c_{0}m_{\beta}(supp(u))},\label{1.2}
\end{align}
where\begin{equation*}
a_{\alpha,\beta}=(2+\beta)\left(2\pi^{\frac{1}{2}}\frac{\Gamma(\frac{\alpha+1}{2})}{\Gamma(\frac{\alpha}{2}+1)}\right)^{\frac{1}{1+\alpha}}
\end{equation*}
is the best constant.
\end{theorem}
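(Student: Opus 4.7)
The plan is to adapt Moser's classical radial-rearrangement proof to the monomial-weighted setting. The argument has three stages: a weighted P\'olya--Szeg\H o symmetrization reducing the inequality to a radial function on a half-disk, a Moser-type logarithmic change of variables turning the radial problem into a one-dimensional inequality on $[0,\infty)$, and a final application of the sharp one-dimensional radial lemma of Moser.

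First, I would construct a monomial-weighted symmetric decreasing rearrangement. Given $u \in C_c^\infty(\Omega)$ with $\Omega \subset \mathbb{R}_+^2$, define $u^\ast$ on $\mathbb{R}_+^2$ to depend only on $\rho = \sqrt{x^2 + t^2}$, with super-level sets $\{u^\ast > s\}$ being the half-disks having the same $m_\beta$-measure as $\{u > s\}$. By construction the target integral is preserved, $\int \Phi(u)\,t^\beta\,dxdt = \int \Phi(u^\ast)\,t^\beta\,dxdt$ for every continuous $\Phi$, and $m_\beta(\mathrm{supp}\,u) = m_\beta(\mathrm{supp}\,u^\ast) = c_\beta R^{\beta+2}/(\beta+2)$ for some $R > 0$. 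A weighted P\'olya--Szeg\H o inequality, obtained from the Cabr\'e--Ros-Oton monomial isoperimetric inequality via the weighted coarea formula, then yields
\begin{equation*}
\int_\Omega |\nabla u^\ast|^{2+\alpha}\,t^\alpha\,dxdt \leq \int_\Omega |\nabla u|^{2+\alpha}\,t^\alpha\,dxdt \leq 1.
\end{equation*}

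Next, passing to polar coordinates with $u^\ast(x,t) = v(\rho)$ and using $\int_0^\pi (\sin\theta)^\gamma\,d\theta = c_\gamma$, Theorem~\ref{theorem1.1} reduces to a one-dimensional statement on $(0,R)$. I would then apply the Moser substitution $\rho = R e^{-s/\mu}$ with $w(s) = v(\rho)$ for a convenient parameter $\mu$; this converts the radial gradient integral into a translation-invariant form proportional to $\mu^{1+\alpha}\int_0^\infty|w'|^{2+\alpha}\,ds$ and turns the volume factor $\rho^{\beta+1}\,d\rho$ into a multiple of $R^{\beta+2}\mu^{-1}e^{-(2+\beta)s/\mu}\,ds$. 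After rescaling $w$ so that the constraint becomes $\int_0^\infty |\tilde w'|^{2+\alpha}\,ds \leq 1$, the problem becomes the sharp form of Moser's one-dimensional radial lemma: for $\tilde w$ absolutely continuous with $\tilde w(0) = 0$ and unit $L^{2+\alpha}$-norm of the derivative,
\begin{equation*}
\int_0^\infty \bigl(e^{|\tilde w|^{(2+\alpha)/(1+\alpha)}}-1\bigr)\,e^{-s}\,ds \leq c_0
\end{equation*}
with $c_0$ universal. Matching the exponential decay rate $(2+\beta)/\mu$ against the factor $1$ on $s$ in the lemma, and tracking all constants through the rescaling, pins down precisely $a_{\alpha,\beta} = (2+\beta)(2c_\alpha)^{1/(1+\alpha)}$ and determines the admissible $c_0(\alpha,\beta)$.

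The main obstacle is the weighted P\'olya--Szeg\H o step when $\alpha \neq \beta$: the symmetrization is tied to the $m_\beta$-distribution of $u$, while the gradient must be controlled in the $t^\alpha$-weighted norm. Overcoming this mismatch requires a careful application of the Cabr\'e--Ros-Oton monomial isoperimetric inequality to the relevant super-level sets, combined with the weighted coarea formula in order to bound the $t^\alpha$-perimeter from below by the perimeter of the corresponding half-disk. Finally, to establish that $a_{\alpha,\beta}$ is the best constant, I would test the inequality against a Moser sequence of truncated logarithmic profiles $v_n(\rho) = A_n\min(\log(R/\rho),\log n)$ suitably normalized for the weight, and verify that the exponential integral diverges for any $a > a_{\alpha,\beta}$ while the right-hand side remains bounded.
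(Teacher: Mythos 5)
Your overall route is the same as the paper's: reduce to a radial function on a half-disk by a weighted rearrangement, perform the Moser change of variables $\rho=Re^{-s/(2+\beta)}$ with a normalizing factor, apply the sharp one-dimensional exponential lemma (the paper uses Adams' 1988 lemma, which is the form you call Moser's radial lemma), and prove sharpness with a Moser sequence. The difference, and the genuine gap, is the symmetrization step. You define $u^{\ast}$ by prescribing that its super-level sets are half-disks of the same $m_{\beta}$-measure as those of $u$, and then assert the P\'olya--Szeg\H o property for the $t^{\alpha}$-weighted gradient via the Cabr\'e--Ros-Oton isoperimetric inequality. That inequality is a \emph{single-weight} statement (the same monomial weight for perimeter and volume), so combined with the coarea formula it only gives P\'olya--Szeg\H o for a rearrangement tied to the \emph{same} weight as the gradient. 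What your construction needs is the mixed comparison: among sets of prescribed $t^{\beta}$-measure, centered half-disks minimize the $t^{\alpha}$-weighted perimeter. For $\alpha\neq\beta$ this is not a consequence of Cabr\'e--Ros-Oton and is not automatic at all --- compare the restrictions of the type $\frac{nl}{n+1}\leq k\leq l+1$ required even for the $L^{1}$ (isoperimetric-type) inequality of Dou--Sun--Wang--Zhu quoted in the introduction. You flag this mismatch yourself as ``the main obstacle,'' but you do not resolve it, and it is precisely the crux: without it the reduction to radial functions fails. The paper avoids the issue by invoking Lam's two-weight rearrangement lemma (Lemma~\ref{lemma2.2}) as a black box, which packages the $t^{\alpha}$-gradient inequality (iii) together with the preservation of $t^{\beta}$-integrals (iv); if you want a self-contained argument you must either prove such a two-weight rearrangement result or cite it, not the single-weight isoperimetric theorem.

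A secondary point: when you ``track all constants'' you announce $a_{\alpha,\beta}=(2+\beta)(2c_{\alpha})^{\frac{1}{1+\alpha}}$ with $c_{\alpha}=\int_{0}^{\pi}(\sin\theta)^{\alpha}d\theta$. The normalization actually produced by the substitution (choose $T$ with $(\frac{2+\beta}{T})^{2+\alpha}\frac{c_{\alpha}}{2+\beta}=1$, as in \eqref{2.6}) gives $T^{\frac{2+\alpha}{1+\alpha}}=(2+\beta)c_{\alpha}^{\frac{1}{1+\alpha}}$, without the extra factor $2$; this is also the value used later in \eqref{3.4}. So recheck that factor when matching your $\mu$ and rescaling --- agreeing with the Gamma-function display in the theorem statement is not by itself a confirmation. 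Apart from these two issues, the change of variables, the application of the one-dimensional lemma, and the Moser-sequence sharpness argument coincide with the paper's proof.
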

Next the result about existence of the extremal functions for a sharp Moser-Trudinger type inequality under dynamic change in this paper as follow.

We write~$B^{+}$~by the two-dimensional unit upper hemisphere which centre at the origin~and write~$D_{0,\alpha}^{1,2+\alpha}(B^{+})$~by the weighted Sobolev space as the completion of spaces~$C_{0}^{\infty}(B^{+})$~under the norm
\begin{align*}
\|u\|_{D_{0,\alpha}^{1,2+\alpha}(B^{+})}=\displaystyle{\left(\int_{B^{+}}|\nabla u|^{2+\alpha}t^{\alpha}dxdt\right)^{\frac{1}{2+\alpha}}},
\end{align*}
where~$D_{0,\alpha,rad}^{1,2+\alpha}(B^{+})$~denotes a subspace composed of the radial functions in~$D_{0,\alpha}^{1,2+\alpha}(B^{+})$.~

Further,~we write~$J(u)=\frac{1}{(2+\beta)c_{0}m_{\beta}(B^{+})}\displaystyle{\int_{B^{+}}\left(e^{a_{\alpha,\beta}{|u|}^{\frac{2+\alpha}{1+\alpha}}}-1\right)t^{\beta}dxdt}$,~~$J^{\delta}_{\alpha,\beta}(0)=\sup\limits_{\{w_{m}\}_{m}}$
$\{\lim\limits_{m\rightarrow\infty}\sup J(w_{m})||\nabla w_{m}|^{2+\alpha}$.~$t^{\alpha}\rightharpoonup\delta_{0}\}$,~$c_{\varsigma,\alpha}={(1-{(1+\varsigma)}^{-1-\alpha})}^{-\frac{1}{1+\alpha}}$~where~$\varsigma\in \mathbb{R^{+}}$.
\begin{theorem}\label{theorem1.2}
Assume that~${(\frac{1+\alpha}{2+\alpha})}^{\frac{1+\alpha}{2+\alpha}}2^{\frac{1}{2+\alpha}}\leq \frac{1}{c_{\varsigma,\alpha}}$,~$\frac{1+\varsigma}{c_{\varsigma,\alpha}}<1$.~For~$\beta>-1$,~there exists a function~$u_{0}\in D_{0,\alpha,rad}^{1,2+\alpha}(B^{+})$~such that~
\begin{equation*}
J(u_{0})=\sup\limits_{{\|u\|}_{D_{0,\alpha}^{1,2+\alpha}(B^{+})}\leq \frac{1}{c_{\varsigma,\alpha}}}J(u).
\end{equation*}
\end{theorem}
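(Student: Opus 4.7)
The plan is to run a Carleson--Chang style concentration--compactness scheme on a radially symmetric maximizing sequence. Let $\{u_m\}\subset D_{0,\alpha,rad}^{1,2+\alpha}(B^+)$ satisfy $\|u_m\|_{D_{0,\alpha}^{1,2+\alpha}(B^+)}\le 1/c_{\varsigma,\alpha}$ and $J(u_m)\to S:=\sup J$. Since the admissible set is norm-bounded in a reflexive weighted Sobolev space, after extracting a subsequence one may assume $u_m\rightharpoonup u_0$ weakly in $D_{0,\alpha,rad}^{1,2+\alpha}(B^+)$, $u_m\to u_0$ a.e.\ in $B^+$, and $|\nabla u_m|^{2+\alpha}t^{\alpha}\,dx\,dt$ converges weakly$^\ast$ to a nonnegative Radon measure. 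The goal is then to upgrade this to strong enough convergence to pass to the limit inside the nonlinearity.

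Second, I would run a dichotomy. If $u_0\not\equiv 0$, the radial version of the weighted Lions concentration--compactness principle (applied with the strict bound $\|u_m\|<1/c_{\varsigma,\alpha}$ effectively available on the complement of a small ball about $0$) shows that $e^{a_{\alpha,\beta}|u_m|^{(2+\alpha)/(1+\alpha)}}$ is equi-integrable against $t^{\beta}\,dx\,dt$; Vitali then gives $J(u_m)\to J(u_0)$ and $u_0$ is the desired maximizer. The delicate case is $u_0\equiv 0$, where radial symmetry forces $|\nabla u_m|^{2+\alpha}t^{\alpha}\rightharpoonup \delta_0$ (up to mass loss) and one is squarely in the regime that defines $J^{\delta}_{\alpha,\beta}(0)$. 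To rule this case out I would produce a sharp upper bound on $J^{\delta}_{\alpha,\beta}(0)$ and a competing test function whose $J$-value strictly exceeds it.

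For the upper bound, I would pass to the one-dimensional radial variable $r\in(0,1)$ with measure $r^{1+\alpha}\,dr$ (the angular integral producing the factor $c_{\alpha}$), truncate $u_m$ at level $L$ outside a concentration ball of radius $r_m\to 0$, and exploit the Moser pointwise bound
\begin{equation*}
|u_m(r)|^{\frac{2+\alpha}{1+\alpha}}\le \frac{1}{(2+\beta)a_{\alpha,\beta}}\left(\frac{1}{c_{\varsigma,\alpha}}\right)^{\!\frac{2+\alpha}{1+\alpha}}\log\frac{1}{r}+o(1),
\end{equation*}
which is exactly why the constant $a_{\alpha,\beta}$ appears in the precise form given in Theorem~\ref{theorem1.1}. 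Integrating and using $1+\alpha>0$, the exponential integral on the concentration annulus is controlled by a convergent radial integral, and the hypothesis $(1+\varsigma)/c_{\varsigma,\alpha}<1$ is what renders the concentration \emph{strictly subcritical}, so $J^{\delta}_{\alpha,\beta}(0)\le 1+\frac{1}{(2+\beta)c_0 m_\beta(B^+)}\,K_{\alpha,\beta,\varsigma}$ for an explicit finite $K_{\alpha,\beta,\varsigma}$.

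Finally, I would construct a Moser-type plateau competitor $\phi_k(r)$ that is constant equal to $M_k$ on $\{r\le e^{-k}\}$, logarithmically affine on the annulus $\{e^{-k}\le r\le 1\}$, with a small correction supported near $\partial B^+$ so that $\|\phi_k\|_{D_{0,\alpha}^{1,2+\alpha}(B^+)}=1/c_{\varsigma,\alpha}$ exactly; the structural assumption $((1+\alpha)/(2+\alpha))^{(1+\alpha)/(2+\alpha)}2^{1/(2+\alpha)}\le 1/c_{\varsigma,\alpha}$ is precisely the budget needed to admit such a plateau within the prescribed norm. Expanding the exponential on the plateau yields $J(\phi_k)>J^{\delta}_{\alpha,\beta}(0)$ for $k$ large, contradicting $u_0\equiv 0$. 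The main obstacle, and the step I expect to be the most delicate, is matching the constants in the upper bound of Step~3 and the lower bound of Step~4 sharply enough that the two structural hypotheses of the theorem indeed produce a strict inequality; this is where the exact Gamma-function form of $a_{\alpha,\beta}$ must be used, rather than any merely proportional constant.
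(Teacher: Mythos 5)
Your overall architecture (radial maximizing sequence, dichotomy between compactness and concentration at the origin, an upper bound for the concentration level $J^{\delta}_{\alpha,\beta}(0)$, and a competitor exceeding it) is the same Carleson--Chang strategy the paper follows, but the decisive last step of your plan has a genuine gap. A Moser-type plateau family $\phi_k$ (constant on $\{r\le e^{-k}\}$, logarithmically affine outside, norm exactly $1/c_{\varsigma,\alpha}$) concentrates at the origin as $k\to\infty$ in precisely the sense of Definition \ref{definition 3.2}, so by the very definition of $J^{\delta}_{\alpha,\beta}(0)$ one has $\limsup_{k}J(\phi_k)\le J^{\delta}_{\alpha,\beta}(0)$: you cannot beat the concentration level with a member of a concentrating family ``for $k$ large'' unless you exhibit a strict excess at some specific finite $k$, which your expansion does not supply. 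Worse, such plateau functions are expected to fall strictly below the sharp concentration bound rather than above it: already in the classical unweighted case the half-line Moser functions give $\int_0^\infty e^{\phi_k^2-s}\,ds\to 3<1+e$, and under the subcritical constraint $\|\cdot\|\le 1/c_{\varsigma,\alpha}<1$ the plateau contribution is exponentially damped, so $J(\phi_k)$ does not exceed the bound of Lemma \ref{lemma 4.5}. The paper instead uses a fixed, non-concentrating Carleson--Chang/Roy function, $f(s)=s/2$ on $[0,2]$, $(s-1)^{1/2}$ on $[2,e^2+1]$, $\equiv e$ afterwards, composed as $\phi_0=f^{2(1+\alpha)/(2+\alpha)}$, for which $\int_0^\infty e^{\phi_0^{(2+\alpha)/(1+\alpha)}-s}\,ds>\tfrac{2.723}{e}+e>e$; the first hypothesis of the theorem is exactly the verification that $\Gamma(\phi_0)=\left(\tfrac{1+\alpha}{2+\alpha}\right)^{\frac{1+\alpha}{2+\alpha}}2^{\frac{1}{2+\alpha}}\le \tfrac{1}{c_{\varsigma,\alpha}}$, i.e.\ that this competitor is admissible.

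Your Step 3 is also too soft to close the argument: an upper bound of the form $J^{\delta}_{\alpha,\beta}(0)\le 1+K_{\alpha,\beta,\varsigma}$ with an unspecified finite $K$ cannot be contradicted by any competitor. What is needed, and what the paper's Lemmas \ref{lemma 4.1}--\ref{lemma 4.5} provide (via the auxiliary points $a_m$ solving $c_{\varsigma,\alpha}f_m^{(2+\alpha)/(1+\alpha)}(a_m)-a_m=-2\ln a_m$, the splitting of the half-line integral at $\tfrac{c_1(1+\alpha)a_m}{4}$, and the quantitative one-dimensional Lemma \ref{lemma 4.1}), is the explicit estimate $J^{\delta}_{\alpha,\beta}(0)\le \frac{c_\beta}{(2+\beta)^2c_0m_\beta(B^+)}\,e$, which is sharp enough to be strictly beaten by the fixed test function above; the hypothesis $\tfrac{1+\varsigma}{c_{\varsigma,\alpha}}<1$ enters there, not merely as a qualitative ``subcriticality''. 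In short, the two hard quantitative steps --- the sharp bound on the concentration level and a non-concentrating admissible competitor strictly exceeding it --- are exactly the ones your proposal defers or replaces by a construction that would not work.
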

The remainder of this paper is organized as follows.~Section 2 presents a weighted sharp Moser-Trudinger inequality,~a logarithmic inequality named weighted Euclidean Onofri inequality and another weighted Moser-Trudinger inequality on bounded domains.~In Section 3,~the principle of concentration compactness is established.~Finally,~the proof of the main assertion regarding the existence of an extremal function of the weighted Moser-Trudinger inequality under dynamic change in the unit ball is detailed in Section 4.
\section{Weighted sharp~Moser-Trudinger~inequalities}
The following Lemmas are required to prove the main theorems.
\begin{lemma}[Adams 1988 \cite{A1988}]\label{lemma2.1}
Let~$1<p<\infty$,~$\frac{1}{p^{'}}+\frac{1}{p}=1$~and let~$a(r,s)$~be a non-negative measurable function on~$[0,\infty)\times[0,\infty)$~such that almost everywhere
$$~a(r,s)\leq1,\makebox{when}~0<r<s~,$$
\begin{equation*}
\sup\limits_{\rm{s}>0}\left(\int_s^\infty a(r,s)^{p'}dr\right)^{\frac{1}{p'}}=b<\infty.
\end{equation*}
If~$\phi\geq0$~and~$\displaystyle{\int_{0}^{\infty}\phi(r)^{p}dr\leq1}$,~where\begin{equation*}F(s)=\displaystyle{s-\left(\int_{0}^{\infty}a(r,s)\phi(r)dr\right)^{p'}}.\end{equation*}
Then there exists a constant~$c_{0}=c_{0}(p,b)$~such that
\begin{equation*}\int_{0}^{\infty}e^{-F(s)}ds\leq c_{0}.\end{equation*}
\end{lemma}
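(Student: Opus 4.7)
The plan is to pass from a sharp pointwise control of $h(s):=\int_0^\infty a(r,s)\phi(r)\,dr$ to integrability of $e^{-F(s)}$ via a distribution-function argument. Set $A(s):=\int_0^s\phi\,dr$ and $\psi(s):=\int_s^\infty\phi^p\,dr$; then $\psi$ is non-increasing with $\psi(0)\le 1$ and $\psi(\infty)=0$. Splitting the integral defining $h(s)$ at $r=s$, the hypothesis $a(r,s)\le 1$ on $r<s$ and Hölder's inequality yield $\int_0^s a\phi\,dr\le A(s)\le s^{1/p'}(1-\psi(s))^{1/p}$, while Hölder on $[s,\infty)$ together with the assumed bound on $(\int_s^\infty a^{p'}\,dr)^{1/p'}$ yields $\int_s^\infty a\phi\,dr\le b\psi(s)^{1/p}$. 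Adding,
\[
h(s)\le s^{1/p'}(1-\psi(s))^{1/p}+b\psi(s)^{1/p},
\]
and applying the two-term Hölder inequality $(x_1y_1+x_2y_2)^{p'}\le(x_1^{p'}+x_2^{p'})(y_1^p+y_2^p)^{p'-1}$ with $(x_1,x_2)=(s^{1/p'},b)$ against $(y_1,y_2)=((1-\psi)^{1/p},\psi^{1/p})$ produces $h(s)^{p'}\le s+b^{p'}$, hence the uniform lower bound $F(s)\ge -b^{p'}$.

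The bound $F\ge -b^{p'}$ yields only the pointwise ceiling $e^{-F}\le e^{b^{p'}}$, not integrability on $[0,\infty)$. I would therefore use the layer-cake identity
\[
\int_0^\infty e^{-F(s)}\,ds=\int_{-b^{p'}}^\infty e^{-\tau}\,\bigl|\{s\ge 0:F(s)<\tau\}\bigr|\,d\tau
\]
and reduce the conclusion to a linear measure bound $|\{s:F(s)<\tau\}|\le \tau_++C(p,b)$. Splitting the super-level set as $\{s\le\tau_+\}\cup\{s>\tau_+:F(s)<\tau\}$, the first piece contributes $\tau_+$ for free. On the second piece, $F(s)<\tau$ combined with the pointwise estimate forces $(s-\tau)^{1/p'}<s^{1/p'}(1-\psi(s))^{1/p}+b\psi(s)^{1/p}$; tangent-line expansions of both sides about $\psi=0$ and $\tau/s=0$ (valid once $s$ is large compared to $\tau$), followed by multiplying through by $s^{1/p}$, reduce the constraint in the variable $u:=s\psi(s)$ to $u/p-bu^{1/p}<\tau/p'+O(s^{-1})$, which pins $u\le u^*(p,b,\tau)$.

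The main obstacle will be converting this pointwise constraint $s\psi(s)\le u^*$ into a genuine uniform measure bound. The naive reading gives only $|\{F<\tau\}|\le|\{s\psi\le u^*\}|$, which need not be finite (consider $\phi(r)=1/r$ on $[1,\infty)$ with $p=2$: one has $s\psi(s)\equiv 1$, yet $F(s)\to\infty$). The finer argument must retain explicit dependence of $h$ on $A(s)$ instead of on its universal Hölder majorant $s^{1/p'}$ once $\phi$ has effectively decayed. I would therefore make a two-regime split with threshold $\psi(s)\gtrless 1/s$: in the tail regime $\psi(s)\ll 1/s$, the pointwise bound sharpens because $A(s)$ is effectively constant, giving $F(s)\ge s-C$ and confining $\{F<\tau\}$ to a bounded interval; in the bulk regime $\psi(s)\gtrsim 1/s$, monotonicity of $\psi$ together with $\int\phi^p\le 1$ limits the measure directly. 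Substituting the resulting $|\{F<\tau\}|\le\tau_++C(p,b)$ back into the layer-cake identity then produces the finite $c_0=c_0(p,b)$, completing the proof.
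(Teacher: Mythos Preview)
The paper does not prove Lemma~2.1 at all: it is quoted verbatim from Adams~\cite{A1988} and used as a black box in the proofs of Theorem~\ref{theorem1.1} and Corollary~\ref{corollary 2.4}. So there is no ``paper's own proof'' to compare against; any assessment has to be against Adams' original argument.

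Your overall architecture is exactly Adams': the split of $h(s)=\int_0^\infty a(r,s)\phi(r)\,dr$ at $r=s$, the H\"older bound $h(s)\le s^{1/p'}(1-\psi(s))^{1/p}+b\,\psi(s)^{1/p}$, the lower bound $F(s)\ge -b^{p'}$, and the reduction via the layer-cake formula to a measure estimate $|\{F<\tau\}|\le A\tau_{+}+B$. That is the right scaffold, and the first half of your write-up is clean.

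The gap is in the measure estimate, which you yourself flag as ``the main obstacle.'' Your two-regime split rests on the assertion that in the tail regime $\psi(s)\ll 1/s$ one has ``$A(s)$ effectively constant, giving $F(s)\ge s-C$.'' This is not true in general: $\phi\in L^{p}$ does not force $\phi\in L^{1}$, so $A(s)=\int_0^s\phi$ can diverge, and the universal H\"older bound $A(s)\le s^{1/p'}$ may be saturated for arbitrarily large $s$ even when $\psi(s)\to 0$. In that situation $h(s)^{p'}$ can stay close to $s$ and $F(s)$ need not grow linearly; your tail argument collapses. Adams' actual proof of the measure bound does not rely on $A(s)$ stabilizing. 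Instead he works directly with the inequality $(s-\lambda)^{1/p'}\le s^{1/p'}(1-\psi(s))^{1/p}+b\,\psi(s)^{1/p}$ on $E_\lambda\cap\{s>\lambda\}$, uses concavity estimates on both sides to extract a lower bound on $s\psi(s)$ (not an upper bound), and then combines this with the integral constraint $\int_0^\infty\phi^p\le 1$ via the identity $\int_0^\infty\psi(s)\,ds=\int_0^\infty r\phi(r)^p\,dr$-type arguments to bound $|E_\lambda|$. Until you carry out that step rigorously, the proof is incomplete.
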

\begin{lemma}[Lam 2017 \cite{L2017}]\label{lemma2.2}Let~$m(E)=\displaystyle{\int_{E}t^{\alpha}dxdt}$~and let~$u$~be a Lipschitz continuous function with compact support in~$\mathbb{\mathbb{R}}_+^{n+1}$.~Suppose~$\{x\in\mathbb{\mathbb{R}}_+^{n+1}:|u|>\lambda\}$~is finite for every positive~$\lambda$.~Then there exists a radial rearrangement~$u^{*}$~of~$u$~such that the following statements are valid.
\begin{enumerate}
\item[\rm{(i)}]$\mbox{For all}~\lambda,~m(\{|u|>\lambda\})=m(\{u^{*}>\lambda\}).$
\item[\rm{(ii)}]$u^{*}~\mbox{is a radially decreasing function}.$
\item[\rm{(iii)}]For~every~Young~function~$\Phi$~(that~is,~$\Phi$~maps~$[0,\infty)$~into~$[0,\infty)$,~vanishes~\\
at~0~and~it~is~convex~and~increasing):
$$\int_{\mathbb{\mathbb{R}}_{+}^{n+1}}\Phi(|\nabla u^{*}|)t^{\alpha}dxdt\leq\int_{\mathbb{\mathbb{R}}_{+}^{n+1}}\Phi(|\nabla u|)t^{\alpha}dxdt.$$
\item[\rm{(iv)}]$\mbox{If}~\Psi:~\mathbb{\mathbb{R}}^{+}\rightarrow \mathbb{\mathbb{R}}^{+}~\mbox{is a nondecreasing function,}~\mbox{then}$
$$\int_{\mathbb{\mathbb{R}}_{+}^{n+1}}\Psi(u)t^{\beta}dxdt=\int_{\mathbb{\mathbb{R}}_{+}^{n+1}}\Psi(u^{*})t^{\beta}dxdt.$$
\end{enumerate}
\end{lemma}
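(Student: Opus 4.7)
The plan is to follow the Carleson--Chang--Struwe--Flucher scheme for existence of extremals, combined with the concentration--compactness principle that will be established in Section 3. Set $S:=\sup\{J(u):\|u\|_{D_{0,\alpha}^{1,2+\alpha}(B^{+})}\le 1/c_{\varsigma,\alpha}\}$ and take a maximizing sequence $\{w_m\}$. Applying Lemma~\ref{lemma2.2} with $\Phi(t)=t^{2+\alpha}$ (part (iii)) and $\Psi(t)=e^{a_{\alpha,\beta}|t|^{(2+\alpha)/(1+\alpha)}}-1$ (part (iv)), I replace each $w_m$ by its radial rearrangement: this preserves the value of $J$ while not increasing the weighted Dirichlet norm, so I may assume $w_m\in D_{0,\alpha,rad}^{1,2+\alpha}(B^{+})$ is radially decreasing. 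Passing to a subsequence yields $w_m\rightharpoonup u_0$ weakly in $D_{0,\alpha,rad}^{1,2+\alpha}(B^{+})$, $w_m\to u_0$ a.e., and $|\nabla w_m|^{2+\alpha}t^{\alpha}\,dx\,dt\rightharpoonup\mu$ weakly-$\ast$ for some nonnegative measure $\mu$ on $\overline{B^{+}}$.

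Next, I would invoke the weighted concentration--compactness principle from Section~3 to split into two cases. Either (a) $\mu$ is fully concentrated at the origin, in which event the very definition of $J^{\delta}_{\alpha,\beta}(0)$ forces $\limsup_m J(w_m)\le J^{\delta}_{\alpha,\beta}(0)$; or (b) $\mu$ is not a Dirac mass at $0$, and the principle then provides uniform exponential integrability, giving strong convergence of the exponential integrals and hence $J(u_0)=S$ with $u_0$ extremal. Part (b) is exactly what I want, so the whole game is to rule out case (a) by proving the strict inequality $S>J^{\delta}_{\alpha,\beta}(0)$.

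To obtain this strict inequality, I would first establish an explicit upper bound for $J^{\delta}_{\alpha,\beta}(0)$ by a Carleson--Chang-type capacity argument: for any sequence $w_m$ with $|\nabla w_m|^{2+\alpha}t^{\alpha}\rightharpoonup\delta_0$, the weighted radial rearrangement is asymptotically flat outside a vanishing ball, and a one-dimensional weighted Hardy estimate (using the monomial $t^{\alpha}$ in cylindrical coordinates) bounds the growth of $w_m$ at the origin; feeding this back into $J$ produces a concrete bound $J^{\delta}_{\alpha,\beta}(0)\le K(\alpha,\beta,\varsigma)$. Then I would construct a Moser-type test family $\phi_\epsilon$ adapted to the weight $t^{\alpha}$, of the truncated capacity-potential type
\[
\phi_\epsilon(r)=\begin{cases} A_\epsilon,& 0\le r\le\epsilon,\\ A_\epsilon^{-\frac{1}{1+\alpha}}\,G_\alpha(r),& \epsilon\le r\le (1+\varsigma)\epsilon,\\ \text{smooth cutoff},& \text{else},\end{cases}
\]
with the constants tuned so that $\|\phi_\epsilon\|_{D_{0,\alpha}^{1,2+\alpha}(B^{+})}=1/c_{\varsigma,\alpha}$. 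Computing $J(\phi_\epsilon)$ asymptotically as $\epsilon\to 0$ and comparing with $K(\alpha,\beta,\varsigma)$ gives $J(\phi_\epsilon)>J^{\delta}_{\alpha,\beta}(0)$, hence $S>J^{\delta}_{\alpha,\beta}(0)$, excluding case (a).

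The main obstacle is the sharp comparison step. The hypotheses $(\frac{1+\alpha}{2+\alpha})^{(1+\alpha)/(2+\alpha)}2^{1/(2+\alpha)}\le 1/c_{\varsigma,\alpha}$ and $\frac{1+\varsigma}{c_{\varsigma,\alpha}}<1$ must enter this comparison in a precise quantitative way: the first is what permits a normalized Moser sequence with $D_{0,\alpha}^{1,2+\alpha}$-norm equal to $1/c_{\varsigma,\alpha}$ to exist, and the second supplies the slack in the annular region $\epsilon\le r\le(1+\varsigma)\epsilon$ that makes $J(\phi_\epsilon)$ beat the concentration ceiling. Because the problem carries two distinct weights $t^{\alpha}$ and $t^{\beta}$ and a non-unit norm threshold, every constant in both the upper bound for $J^{\delta}_{\alpha,\beta}(0)$ and in the asymptotic expansion of $J(\phi_\epsilon)$ must be tracked explicitly; this calibration, rather than the abstract variational framework, is where the argument is delicate.
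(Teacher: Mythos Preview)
Your proposal does not address the stated lemma at all. Lemma~\ref{lemma2.2} is a weighted rearrangement result: given a compactly supported Lipschitz function $u$ on $\mathbb{R}_+^{n+1}$, one must \emph{construct} a radially decreasing $u^{*}$ that is equimeasurable with $|u|$ with respect to the measure $t^{\alpha}\,dx\,dt$, and then verify the weighted P\'olya--Szeg\H{o} inequality (iii) and the layer-cake identity (iv). Nothing in your write-up touches any of these four items; instead you outline the Carleson--Chang/concentration--compactness strategy for proving the existence of an extremal for $J$, which is the content of Theorem~\ref{theorem1.2}, not of Lemma~\ref{lemma2.2}. You even \emph{invoke} Lemma~\ref{lemma2.2} as a tool in your argument, which makes the circularity explicit.

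Note also that in the paper Lemma~\ref{lemma2.2} carries no proof: it is quoted from Lam~\cite{L2017} and used as a black box. So there is no ``paper's own proof'' to compare against here. If you were actually asked to supply a proof of Lemma~\ref{lemma2.2}, the correct route is the standard weighted symmetrization construction: define $u^{*}(x)=\sup\{\lambda>0: m(\{|u|>\lambda\})> m(B_{|x|}^{+})\}$, check (i) and (ii) directly from this definition, obtain (iv) by the layer-cake formula with the measure $t^{\beta}\,dx\,dt$, and derive (iii) from the weighted isoperimetric inequality for the monomial weight $t^{\alpha}$ together with the coarea formula, exactly as in \cite{L2017} and \cite{CS2013}. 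What you wrote is a reasonable sketch of the paper's Section~4 argument for Theorem~\ref{theorem1.2}, but it is simply the wrong target.
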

\begin{proof}[Proof of theorem \ref{theorem1.1}]
Using Lemma \ref{lemma2.2},~we can assume that~$u\in C_{c}^{\infty}(\mathbb{R}_{+}^{2})$~is radially decreasing with~$\rm{supp}(u)=\overline{B_{R}^{+}}\subset\mathbb{R}_{+}^{2}$~where~$R>0$.~Write~$b_{\alpha}=\frac{2+\alpha}{1+\alpha}$~as~$\alpha>-1$.~It yields that ~$b>1$.~Let~$\displaystyle{c_{\beta}=\int_{0}^{\pi}(\sin\theta)^{\beta}d\theta}$~and~$\displaystyle{c_{\alpha}=\int_{0}^{\pi}(\sin\theta)^{\alpha}d\theta}$,~we have
\begin{align}\label{2.1}
&\displaystyle{\int_{\Omega}\left(e^{a|u|^{b_{\alpha}}}-1\right)t^{\beta}dxdt}=\int_{0}^{\pi}(\sin\theta)^{\beta}d\theta\int_{0}^{R}\left(e^{a|u|^{b_{\alpha}}}-1\right)\rho^{1+\beta}d\rho=c_{\beta}\int_{0}^{R}\left(e^{a|u|^{b_{\alpha}}}-1\right)\rho^{1+\beta}d\rho
\end{align}
and
\begin{align}\label{2.2}
&\displaystyle{\int_{\Omega}|\nabla u|^{2+\alpha}t^{\alpha}dxdt}=\int_{0}^{\pi}(\sin\theta)^{\alpha}d\theta\int_{0}^{R}|u'(\rho)|^{2+\alpha}\rho^{1+\alpha}d\rho=c_{\alpha}\int_{0}^{R}|u'(\rho)|^{2+\alpha}\rho^{1+\alpha}d\rho,
\end{align}
where
\begin{align}\label{2.3}
&c_{\alpha}=\int_{0}^{\pi}(\sin\theta)^{\alpha}d\theta=2\int_{0}^{\frac{\pi}{2}}(\sin\theta)^{\alpha}d\theta=2B\left(\frac{1}{2},\frac{\alpha+1}{2}\right)=\frac{2\Gamma\left(\frac{\alpha+1}{2}\right)\Gamma\left(\frac{1}{2}\right)}{\Gamma(\frac{\alpha}{2}+1)}=2\pi^{\frac{1}{2}}\frac{\Gamma\left(\frac{\alpha+1}{2}\right)}{\Gamma\left(\frac{\alpha}{2}+1\right)}.
\end{align}
We define~$v(s)=Tu\left(Re^{-\frac{s}{2+\beta}}\right)$.~Then
\begin{align}\label{2.4}
&\quad\ \displaystyle{\int_{0}^{R}\left(e^{a|u|^{b_{\alpha}}}-1\right)\rho^{1+\beta}d\rho}&\notag\\
&=\int_{0}^{\infty}\left(e^{{a\left|u(Re^{-\frac{s}{2+\beta}})\right|}^{b_{\alpha}}}-1\right)\left(Re^{-\frac{s}{2+\beta}}\right)^{1+\beta}\frac{R}{2+\beta}e^{-\frac{s}{2+\beta}}ds\notag\\
&=\frac{R^{2+\beta}}{2+\beta}\int_{0}^{\infty}\left(e^{{\frac{a}{T^{b_{\alpha}}}}|v(s)|^{b_{\alpha}}}-1\right)e^{-\frac{s(2+\beta)}{2+\beta}}ds\notag\\
&=\frac{R^{2+\beta}}{2+\beta}\int_{0}^{\infty}\left(e^{\frac{a}{T^{b_{\alpha}}}|v(s)|^{b_{\alpha}}}-1\right)e^{-s}ds,
\end{align}
and
\begin{align}\label{2.5}
&\quad\ \displaystyle{\int_{0}^{R}|u'(\rho)|^{2+\alpha}\rho^{1+\alpha}d\rho}&\notag\\
&=\int_{0}^{\infty}|u'\left(Re^{\frac{-s}{2+\beta}}\right)|^{2+\alpha}\left(\frac{R}{{(2+\beta)}}e^{\frac{-s}{2+\beta}}\right)^{2+\alpha}\left(Re^{\frac{-s}{2+\beta}}\right)^{1+\alpha}\frac{R}{2+\beta}e^{\frac{-s}{2+\beta}}ds\notag\\
&=\left(\frac{2+\beta}{T}\right)^{2+\alpha}\frac{1}{2+\beta}\int_{0}^{\infty}|v'(s)|^{2+\alpha}\left(e^{\frac{s}{2+\beta}}\right)^{2+\alpha}\left(e^{\frac{-s}{2+\beta}}\right)^{2+\alpha}ds\notag\\
&=\left(\frac{2+\beta}{T}\right)^{2+\alpha}\frac{1}{2+\beta}\int_{0}^{\infty}|v'(s)|^{2+\alpha}ds.
\end{align}
Now,~we can select~$T$~such that~$(\frac{2+\beta}{T})^{2+\alpha}\frac{1}{2+\beta}c_{\alpha}=1$.~Then~$T=(2+\beta)(\frac{c_{\alpha}}{2+\beta})^{\frac{1}{2+\alpha}}$,~\begin{align}\label{2.6}T^{b_{\alpha}}=\left((2+\beta)^{\frac{1}{b_{\alpha}}}c_{\alpha}^{\frac{1}{2+\alpha}}\right)^{b_{\alpha}}=(2+\beta)c_{\alpha}^{\frac{1}{1+\alpha}}.\end{align}
Combining~\eqref{2.5}~and~\eqref{2.6},~we obtain
\begin{align*}
&\int_{\Omega}|\nabla u|^{2+\alpha}t^{\alpha}dxdt=c_{\alpha}\int_{0}^{R}|u'(\rho)|^{2+\alpha}\rho^{1+\alpha}d\rho&\\
&\quad\quad\quad\quad\quad\quad\quad~~=c_{\alpha}\left(\frac{2+\beta}{T}\right)^{2+\alpha}\frac{1}{2+\beta}\int_{0}^{\infty}|v'(s)|^{2+\alpha}ds&\\
&\quad\quad\quad\quad\quad\quad\quad~~=\int_{0}^{\infty}|v'(s)|^{2+\alpha}ds\leq1.
\end{align*}
Choosing
\begin{align*}
a(r,s)=\begin{cases}
1,&\text{0$\leq r\leq s$},\\0,&\text{$s<r$},\\
\end{cases}
\end{align*}
~$p'=b_{\alpha}$,~$\phi(r)=v'(r)$~and using Lemma \ref{lemma2.1},~we get
\begin{align*}
\sup\limits_{s>0}\int_{s}^{\infty}a(r,s)^{b_{\alpha}}dr=0<\infty,
\end{align*}
and
\begin{align*}
\int_{0}^{\infty}e^{-F(s)}ds=\int_{0}^{\infty}e^{\displaystyle{-s+\left(\int_{0}^{+\infty}a(r,s)\phi(r)ds\right)^{b_{\alpha}}}}ds,
\end{align*}
where
\begin{align*}
F(s)=s-\displaystyle{{\left(\int_{0}^{\infty}a(r,s)\phi(r)dr\right)}^{b_{\alpha}}}.
\end{align*}
By
\begin{align*}
\displaystyle{\int_{0}^{+\infty}a(r,s)\phi(r)dr}=\int_{0}^{s}\phi(r)dr=\int_{0}^{s}v'(r)dr=v'(s),
\end{align*}
there exists a constant~$c_{0}=c(\alpha,\beta)$~in~$\mathbb{R}$~such that
\begin{align}\label{2.7}
\displaystyle{\frac{1}{2+\beta}\int_{0}^{\infty}\left(e^{\frac{a_{\alpha,\beta}}{T^{b_{\alpha}}}|v(s)|^{b_{\alpha}}}-1\right)e^{-s}ds}
=\displaystyle{\frac{1}{2+\beta}\int_{0}^{\infty}\left(e^{|v(s)|^{b_{\alpha}}}-1\right)e^{-s}ds\leq c_{0}}.
\end{align}
Combining~\eqref{2.1},~\eqref{2.4},~\eqref{2.7}~and~$\frac{a}{T^{b}}\leq\frac{a_{\alpha,\beta}}{T^{b}}=1,$~we get
\begin{align}\label{2.8}
&\quad\ \displaystyle{\int_{\Omega}\left(e^{a{|u|}^{b_{\alpha}}}-1\right)t^{\beta}dxdt}&\notag\\
&=c_{\beta}\int_{0}^{R}\left(e^{a{|u|}^{b_{\alpha}}}-1\right)\rho^{1+\beta}d\rho\notag\\
&=\frac{c_{\beta}R^{2+\beta}}{2+\beta}\int_{0}^{+\infty}\left(e^{\frac{a}{T^{b_{\alpha}}}{|v(s)|}^{b_{\alpha}}}-1\right)e^{-s}ds\notag\\
&\leq c_{0}c_{\beta}R^{2+\beta}.
\end{align}A simple calculation yields that~$c_{\beta}=\displaystyle{\int_{0}^{\pi}\sin^{\beta}\theta d\theta=\int_{\partial B^{+}}t^{\beta}d\theta}$~and~$\frac{\displaystyle{\int_{B_{R}^{+}}t^{\beta}dxdt}}{\displaystyle{\int_{B^{+}}t^{\beta}dxdt}}=R^{2+\beta}$.~Then
\begin{align}\label{2.9}
&c_{0}c_{\beta}R^{2+\beta}=\displaystyle{c_{0}\int_{\partial B^{+}}t^{\beta}d\sigma R^{2+\beta}}=\displaystyle{c_{0}\int_{\partial B^{+}}t^{\beta}d\sigma\frac{\displaystyle{\int_{B_{R}^{+}}}t^{\beta}dxdt}{\displaystyle{\int_{B^{+}}t^{\beta}dxdt}}}.
\end{align}
By
\begin{align}\label{2.10}
\displaystyle{\int_{B^{+}}t^{\beta}}dxdt=\displaystyle{\int_{0}^{1}\int_{\partial B_{r}^{+}}t^{\beta}d\theta dr}=\displaystyle{\int_{0}^{1}r^{1+\beta}\left(\int_{\partial B^{+}}t^{\beta}d\sigma\right)dr}=\frac{1}{2+\beta}\displaystyle{\left(\int_{\partial B^{+}}t^{\beta}d\sigma\right)},
\end{align}
we get
\begin{align}\label{2.11}
&\displaystyle{c_{0}\int_{\partial B^{+}}t^{\beta}d\sigma\frac{\displaystyle{\int_{B_{R}^{+}}}t^{\beta}dxdt}{\displaystyle{\int_{B^{+}}t^{\beta}dxdt}}}=(2+\beta)c_{0}\displaystyle{\int_{B^{+}}t^{\beta}dxdt\frac{\displaystyle{\int_{B_{R}^{+}}t^{\beta}dxdt}}{\displaystyle{\int_{B^{+}}t^{\beta}dxdt}}}=(2+\beta)c_{0}\displaystyle{\int_{B_{R}^{+}}t^{\beta}dxdt}.
\end{align}
Then
\begin{align}\label{2.12}
\displaystyle{\int_{\Omega}\left(e^{a{|u|}^{b_{\alpha}}}-1\right)t^{\beta}dxdt\leq c_{0}m_{\beta}(\rm{supp}(u))(2+\beta)}.
\end{align}

It can be shown that the constant~$a_{\alpha,\beta}$~is sharp.~Because of the definition of Moser~sequence~$M_{n}$~which is analogous to~\cite{M1971},~we construct
\begin{align}\label{2.13}
M_{n}(x)=c_{\alpha}^{-\frac{1}{2+\alpha}}\begin{cases}
\text{$(\frac{n}{2+\beta})^{\frac{1}{b_{\alpha}}}$},&\text{${0\leq |(x,t)|\leq e^{-\frac{n}{2+\beta}}}$},\\
\text{$(\frac{2+\beta}{n})^{1-\frac{1}{b_{\alpha}}}\ln\frac{1}{|(x,t)|}$},&\text{${e^{-\frac{n}{2+\beta}}<|x,t|\leq 1}$},\\
0,&\text{${|x|\geq1}$}.
\end{cases}
\end{align}
By a simple calculation,~we obtain~$\displaystyle{\int_{\mathbb{\mathbb{\mathbb{R}}}_{+}^{2}}|\nabla M_{n}|^{2+\alpha}t^{\alpha}dxdt=1}$.~For~$a>a_{\alpha,\beta}$,~it can be assumed that~$\Omega=B_{+}$.~It follows from~\eqref{2.13}~that
\begin{align}\label{2.14}
&\ \quad\ \quad\quad\quad\quad\quad\quad\quad\quad\quad\ \displaystyle{\int_{\Omega}\left(e^{a|u|^{b_{\alpha}}}-1\right)t^{\beta}dxdt}&\notag\\
&\ \quad\ \quad\quad\quad\quad\quad\quad\quad\quad=c_{\beta}\int_{0}^{1}\left(e^{a|M_{n}|^{b_{\alpha}}}-1\right)\rho^{\beta+1}d\rho\notag\\
&\ \quad\ \quad\quad\quad\quad\quad\quad\quad\quad\geq c_{\beta}\int_{0}^{e^{\frac{-n}{2+\beta}}}\left(e^{{a}|\left(\frac{n}{2+\beta}\right)^{\frac{1}{b_{\alpha}}}\left(\frac{1}{c_{\alpha}}\right)^{\frac{1}{2+\alpha}}|^{b_{\alpha}}}-1\right)\rho^{\beta+1}d\rho\notag\\
&\ \quad\ \quad\quad\quad\quad\quad\quad\quad\quad\geq c_{\beta}\int_{0}^{e^{\frac{-n}{2+\beta}}}\left(e^{\frac{a}{a_{\alpha,\beta}}n}-1\right)\rho^{\beta+1}d\rho&\notag\\
&\ \quad\ \quad\quad\quad\quad\quad\quad\quad\quad=c_{\beta}\left(e^{\frac{a}{a_{\alpha,\beta}}n}-1\right)\int_{0}^{e^{\frac{-n}{2+\beta}}}\rho^{\beta+1}d\rho\notag\\
&\ \quad\ \quad\quad\quad\quad\quad\quad\quad\quad=c_{\beta}\frac{1}{2+\beta}\left(e^{\frac{an}{a_{\alpha,\beta}}}-1\right)e^{-n}\notag\\
&\ \quad\ \quad\quad\quad\quad\quad\quad\quad\quad=\frac{c_{\beta}}{2+\beta}\left(e^{\left(\frac{a}{a_{\alpha,\beta}}-1\right)n}-e^{-n}\right).
\end{align}
Therefore,~$\frac{c_{\beta}}{2+\beta}\left(e^{\left(\frac{a}{a_{\alpha,\beta}}-1\right)n}-e^{-n}\right)\rightarrow\infty$,~as~$n\rightarrow\infty$.
\end{proof}
In the following corollary,~we present the weighted Euclidean Onofri inequality on two-dimensional upper half space and write~$D_{\alpha}^{1,2+\alpha}(\Omega)$~by the weighted Sobolev space under the norm
\begin{align*}
\|u\|_{D_{\alpha}^{1,2+\alpha}(\Omega)}=\displaystyle{\left(\int_{\Omega}|\nabla u|^{2+\alpha}t^{\alpha}dxdt\right)^{\frac{1}{2+\alpha}}}.
\end{align*}
\begin{corollary}\label{corollary 2.3}
Let~$\alpha,\beta>-1$~and~$\zeta_{\alpha,\beta,u}=e^{(a_{\alpha,\beta}\frac{2+\alpha}{1+\alpha})^{-(1 +\alpha)}\frac{1}{2+\alpha}\|u\|_{D_{\alpha}^{1,2+\alpha}(\Omega)}^{^{2+\alpha}}}$.~Then there exists a constant~$c_{0}=c_{0}(\alpha,\beta)$~such that for each~$u\in C_c^\infty(\Omega)$~with~$\displaystyle{\int_{\Omega}|\nabla u|^{2+\alpha}t^{\alpha}dxdt}\leq1$,~we have
\begin{equation*}
\ln{\displaystyle{\int_{\Omega}\left(e^{u}-\zeta_{\alpha,\beta,u}\right)t^{\beta}dxdt}}\leq ln{\left((2+\beta)c_{0}m_{\beta}(\rm{supp}(u))\right)}+\left(a_{\alpha,\beta}\frac{2+\alpha}{1+\alpha}\right)^{-(1+\alpha)}\frac{1}{2+\alpha}\|u\|_{D_{\alpha}^{1,2+\alpha}(\Omega)}^{2+\alpha}
\end{equation*}
where\begin{equation*}
a_{\alpha,\beta}=(2+\beta)\left(2\pi^{\frac{1}{2}}\frac{\Gamma(\frac{\alpha+1}{2})}{\Gamma(\frac{\alpha}{2}+1)}\right)^{\frac{1}{1+\alpha}}
\end{equation*}
is the best constant.
\end{corollary}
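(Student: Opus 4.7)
The inequality is an Onofri-type repackaging of Theorem \ref{theorem1.1}: I need to pass from $|u|^{b_\alpha}$ in the Moser--Trudinger exponent to the linear function $u$ in the Onofri exponent, at the cost of the scale-dependent additive constant $\ln\zeta_{\alpha,\beta,u}$. The natural bridge is a pointwise Young's inequality with the conjugate pair $(b_\alpha,2+\alpha)$, because the two exponents that appear on the right-hand side of Young's inequality are exactly those built into Theorem \ref{theorem1.1} and into the definition of $\zeta_{\alpha,\beta,u}$.

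\textbf{Steps.} First, I normalize by setting $w=u/\|u\|_{D_\alpha^{1,2+\alpha}(\Omega)}$, so that $\|w\|_{D_\alpha^{1,2+\alpha}(\Omega)}=1$ and Theorem \ref{theorem1.1} is available for $w$. Second, for any $\lambda>0$, Young's inequality with exponents $b_\alpha=\frac{2+\alpha}{1+\alpha}$ and $2+\alpha$ applied to $|u|=(\lambda|u|)\cdot\lambda^{-1}$ gives
\[
u\leq |u|\leq\frac{(\lambda|u|)^{b_\alpha}}{b_\alpha}+\frac{\lambda^{-(2+\alpha)}}{2+\alpha}.
\]
Third, I calibrate $\lambda=(a_{\alpha,\beta}b_\alpha)^{1/b_\alpha}/\|u\|_{D_\alpha^{1,2+\alpha}(\Omega)}$, so that the first term becomes exactly $a_{\alpha,\beta}|w|^{b_\alpha}$; using $(2+\alpha)/b_\alpha=1+\alpha$, the second term is then precisely $\bigl(a_{\alpha,\beta}\frac{2+\alpha}{1+\alpha}\bigr)^{-(1+\alpha)}\frac{1}{2+\alpha}\|u\|_{D_\alpha^{1,2+\alpha}(\Omega)}^{2+\alpha}=\ln\zeta_{\alpha,\beta,u}$. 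Fourth, exponentiating yields the pointwise bound $e^u-\zeta_{\alpha,\beta,u}\leq\zeta_{\alpha,\beta,u}\bigl(e^{a_{\alpha,\beta}|w|^{b_\alpha}}-1\bigr)$. Fifth, I integrate against $t^\beta\,dx\,dt$, apply Theorem \ref{theorem1.1} to $w$ on the right-hand side to obtain
\[
\int_\Omega(e^u-\zeta_{\alpha,\beta,u})t^\beta\,dx\,dt\leq\zeta_{\alpha,\beta,u}(2+\beta)c_0\, m_\beta(\mathrm{supp}(u)),
\]
and take logarithms to recover the stated inequality.

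\textbf{Main obstacle.} The substantive analytic input is already contained in Theorem \ref{theorem1.1}; what remains is essentially bookkeeping. The only subtle point is matching constants: the Young parameter $\lambda$ must be tuned so that the leading term equals $a_{\alpha,\beta}|w|^{b_\alpha}$ with no leftover coefficient, which in turn forces the residue to be exactly $\ln\zeta_{\alpha,\beta,u}$ with the prefactor $\bigl(a_{\alpha,\beta}\frac{2+\alpha}{1+\alpha}\bigr)^{-(1+\alpha)}$ rather than some worse constant. For the sharpness of $a_{\alpha,\beta}$, equality in Young's inequality is achieved when the two Young terms agree, and the resulting extremizer matches (up to rescaling) the Moser-type sequence \eqref{2.13} already used in Theorem \ref{theorem1.1}; plugging this family into the logarithmic inequality shows that any $a>a_{\alpha,\beta}$ would contradict the sharpness established there, so the constant is inherited.
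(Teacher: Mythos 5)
Your proposal is correct and follows essentially the same route as the paper: the paper also applies Young's inequality with the conjugate pair $\bigl(\tfrac{2+\alpha}{1+\alpha},\,2+\alpha\bigr)$ and $\epsilon=a_{\alpha,\beta}$ to $u=\tfrac{u}{\|u\|_{D_{\alpha}^{1,2+\alpha}(\Omega)}}\cdot\|u\|_{D_{\alpha}^{1,2+\alpha}(\Omega)}$, which is exactly your calibration of $\lambda$, then integrates, invokes Theorem \ref{theorem1.1} for the normalized function, and takes logarithms. The only cosmetic difference is that you phrase the calibration through a scaling parameter $\lambda$ rather than the $\epsilon$-form constant $c(\epsilon)=(\epsilon p)^{-q/p}q^{-1}$, and your closing sharpness remark goes slightly beyond the paper, which simply inherits the constant from Theorem \ref{theorem1.1} without further argument.
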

\begin{proof}
By Young inequality with~$\epsilon$,~$ab\leq\epsilon a^{p}+c(\epsilon)b^{q}(a,b>0)$~where~$c(\epsilon)=(\epsilon p)^{-\frac{q}{p}}q^{-1}$,~we choose ~$a=\frac{u}{\|u\|_{D_{\alpha}^{1,2+\alpha}(\Omega)}},~b=\|u\|_{D_{\alpha}^{1,2+\alpha}(\Omega)}$,~$p=\frac{2+\alpha}{1+\alpha}$,~$q=2+\alpha,~c(\epsilon)=(a_{\alpha,\beta}\frac{2+\alpha}{1+\alpha})^{-(1 +\alpha)}\frac{1}{2+\alpha},~\epsilon=a_{\alpha,\beta}$~and then
\begin{align*}
&\ \ \quad\quad\quad\quad\quad\quad\quad\quad\quad\ \displaystyle{\int_{\Omega}(e^{u}-\zeta_{\alpha,\beta,u})t^{\beta}dxdt}&\\
&\ \quad\quad\quad\quad\quad\quad\quad\quad=\displaystyle{\int_{\Omega}\left(e^{\frac{u}{\|u\|_{D_{\alpha}^{1,2+\alpha}(\Omega)}}\|u\|_{D_{\alpha}^{1,2+\alpha}(\Omega)}}-e^{c(\epsilon)\|u\|_{D_{\alpha}^{1,2+\alpha}(\Omega)}^{^{2+\alpha}}}\right)t^{\beta}dxdt}\\
&\ \quad\ \quad\quad\quad\quad\quad\quad\quad\displaystyle{\leq\int_{\Omega}\left(e^{\epsilon\left(\frac{u}{\|u\|_{D_{\alpha}^{1,2+\alpha}(\Omega)}}\right)^{\frac{2+\alpha}{1+\alpha}}+c(\epsilon)\|u\|_{D_{\alpha}^{1,2+\alpha}(\Omega)}^{^{2+\alpha}}}-e^{c(\epsilon)\|u\|_{D_{\alpha}^{1,2+\alpha}(\Omega)}^{^{2+\alpha}}}\right)t^{\beta}dxdt}\\
&\ \quad\ \quad\quad\quad\quad\quad\quad\quad\displaystyle{\leq e^{c(\epsilon)\|u\|_{D_{\alpha}^{1,2+\alpha}(\Omega)}^{^{2+\alpha}}}\int_{\Omega}\left(e^{{\epsilon}\left(\frac{u}{\|u\|_{D_{\alpha}^{1,2+\alpha}(\Omega)}}\right)^{\frac{2+\alpha}{1+\alpha}}}-1\right)t^{\beta}dxdt}\\
&\ \quad\ \quad\quad\quad\quad\quad\quad\quad\displaystyle{\leq e^{c(\epsilon)\|u\|_{D_{\alpha}^{1,2+\alpha}(\Omega)}^{^{2+\alpha}}}\sup\limits_{\|u\|_{D_{\alpha}^{1,2+\alpha}(\Omega)}\leq1}\int_{\Omega}\left(e^{{\epsilon}|u|^{\frac{2+\alpha}{1+\alpha}}}-1\right)t^{\beta}dxdt}\\
&\ \quad\ \quad\quad\quad\quad\quad\quad\quad\displaystyle{\leq(2+\beta)c_{0}m_{\beta}(supp(u))e^{c(\epsilon)\|u\|_{D_{\alpha}^{1,2+\alpha}(\Omega)}^{^{2+\alpha}}}}.
\end{align*}
Here,~we use Theorem~\ref{theorem1.1} in the last step.~Therefore
\begin{align*}
\displaystyle{\int_{\Omega}\left(e^{u}-\zeta_{\alpha,\beta,u}\right)t^{\beta}dxdt\leq (2+\beta)c_{0}m_{\beta}(supp(u))e^{\left(a_{\alpha,\beta}\frac{2+\alpha}{1+\alpha}\right)^{-(1+\alpha)}\frac{1}{2+\alpha}\|u\|_{D_{\alpha}^{1,2+\alpha}(\Omega)}^{^{2+\alpha}}}}.
\end{align*}
After taking the logarithm,~we obtain the deserved conclusion.
\begin{equation*}
\ln{\displaystyle{\int_{\Omega}\left(e^{u}-\zeta_{\alpha,\beta,u}\right)t^{\beta}dxdt}}\leq ln{\left((2+\beta)c_{0}m_{\beta}(supp(u))\right)}+\left(a_{\alpha,\beta}\frac{2+\alpha}{1+\alpha}\right)^{-(1+\alpha)}\frac{1}{2+\alpha}\|u\|_{D_{\alpha}^{1,2+\alpha}(\Omega)}^{2+\alpha}.
\end{equation*}
\end{proof}
\begin{corollary}\label{corollary 2.4}
Let~$\alpha,\beta>-1$,~$\alpha<\beta$,~$b\leq b_{\alpha,\beta}$~and let~$B^{+}_{{R}_{0}}$~be the two-dimensional upper hemisphere centered at the origin with radius~${R}_{0}$~which satisfies~$c_{\alpha}^{\frac{1}{2+\alpha}}(2+\beta)^{\frac{1+\alpha}{2+\alpha}}u(R_{0})=1$.~If~$u\in C_{0}^{\infty}(\Omega)$~is a symmetric-decreasing rearrangement function and satisfies~$\displaystyle{\int_{B^{+}_{{R}_{0}}}|\nabla u|^{2+\alpha}t^{\alpha}}dxdt\leq1$.~Then we have
\begin{align*}
\displaystyle{\int_{B^{+}_{R_{0}}}\left(e^{b|u|^{\frac{2+\beta}{1+\beta}}}-1\right)t^{\beta}dxdt\leq (2+\beta)c_{0}m_{\beta}(B_{R_{0}}^{+})},
\end{align*}
where~$b_{\alpha,\beta}={(2+\beta)}^{\frac{(1+\alpha)(2+\beta)}{(2+\alpha)(1+\beta)}}{c_{\alpha}}^{\frac{2+\beta}{(2+\alpha)(1+\beta)}}.$~
\end{corollary}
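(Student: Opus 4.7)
The plan is to deduce Corollary~\ref{corollary 2.4} from Theorem~\ref{theorem1.1} via a pointwise comparison of exponents on $B^+_{R_0}$. The key observation is that, setting $T := c_\alpha^{1/(2+\alpha)}(2+\beta)^{(1+\alpha)/(2+\alpha)}$, the condition defining $R_0$ reads simply $u(R_0) = 1/T$, where $T$ is precisely the normalizing constant that appears in the proof of Theorem~\ref{theorem1.1}. Since $u$ is symmetric-decreasing, this forces $u(x) \geq 1/T$ for every $x \in B^+_{R_0}$.

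First I would prove the pointwise inequality
$$b\,|u(x)|^{(2+\beta)/(1+\beta)} \leq a_{\alpha,\beta}\,|u(x)|^{(2+\alpha)/(1+\alpha)}, \qquad x \in B^+_{R_0}.$$
Dividing by $|u|^{(2+\beta)/(1+\beta)}$, this is $b \leq a_{\alpha,\beta}\,|u|^{(\beta-\alpha)/((1+\alpha)(1+\beta))}$; since $\beta>\alpha$ the right-hand side is increasing in $|u|$ and therefore minimized over $B^+_{R_0}$ at $|u|=1/T$. A direct exponent count, separating the powers of $(2+\beta)$ and of $c_\alpha$, yields
$$a_{\alpha,\beta}\,T^{-(\beta-\alpha)/((1+\alpha)(1+\beta))} = (2+\beta)^{(1+\alpha)(2+\beta)/((2+\alpha)(1+\beta))}\,c_\alpha^{(2+\beta)/((2+\alpha)(1+\beta))} = b_{\alpha,\beta},$$
so the hypothesis $b \leq b_{\alpha,\beta}$ is exactly the bound required.

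Next, monotonicity of $t \mapsto e^t-1$ turns this pointwise inequality into
$$\int_{B^+_{R_0}}\bigl(e^{b|u|^{(2+\beta)/(1+\beta)}}-1\bigr)t^\beta\,dxdt\leq \int_{B^+_{R_0}}\bigl(e^{a_{\alpha,\beta}|u|^{(2+\alpha)/(1+\alpha)}}-1\bigr)t^\beta\,dxdt,$$
and Theorem~\ref{theorem1.1}, combined with the hypothesis $\int_{B^+_{R_0}}|\nabla u|^{2+\alpha}t^\alpha\,dxdt\leq 1$, controls the right-hand side by $(2+\beta)c_0\,m_\beta(B^+_{R_0})$.

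The main technical obstacle is this last step: $u$ is not a priori in $C_c^\infty(B^+_{R_0})$, since it need not vanish on $\partial B^+_{R_0}$. I plan to finesse this by revisiting the Carleson--Chang argument used to prove Theorem~\ref{theorem1.1} directly on $B^+_{R_0}$, working with the shifted radial profile $\tilde v(s) = T\bigl(u(R_0 e^{-s/(2+\beta)})-u(R_0)\bigr)$, which satisfies $\tilde v(0)=0$ and $\int_0^\infty|\tilde v'|^{2+\alpha}ds\leq 1$, so that Adams' Lemma~\ref{lemma2.1} remains applicable. The normalization $u(R_0)=1/T$ is exactly what is needed to convert the resulting Adams estimate into a bound with the correct mass $m_\beta(B^+_{R_0})$ on the right-hand side.
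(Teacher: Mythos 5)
Your first step is fine, and it is in substance the same comparison the paper makes after its change of variables: with $T=c_\alpha^{1/(2+\alpha)}(2+\beta)^{(1+\alpha)/(2+\alpha)}$ one has $a_{\alpha,\beta}=T^{b_\alpha}$ (see \eqref{2.3}, \eqref{2.6}), so your identity $a_{\alpha,\beta}T^{-(b_\alpha-b_\beta)}=T^{b_\beta}=b_{\alpha,\beta}$ checks out, and $u\ge 1/T$ on $B^+_{R_0}$ is exactly the paper's statement $v\ge 1$. The genuine gap is in your second step. The intermediate inequality you reduce to, namely $\int_{B^+_{R_0}}\bigl(e^{a_{\alpha,\beta}|u|^{b_\alpha}}-1\bigr)t^\beta\,dxdt\le(2+\beta)c_0\,m_\beta(B^+_{R_0})$ with $b_\alpha=\frac{2+\alpha}{1+\alpha}$, is \emph{false} for the class of functions in the corollary, so no revisiting of the Carleson--Chang/Adams argument can establish it. Indeed, in the variables $v(s)=Tu(R_0e^{-s/(2+\beta)})$, $\tilde v=v-1$, that quantity equals $\frac{c_\beta R_0^{2+\beta}}{2+\beta}\int_0^\infty\bigl(e^{(1+\tilde v)^{b_\alpha}}-1\bigr)e^{-s}\,ds$, and the hypotheses only give $\tilde v(0)=0$, $\int_0^\infty|\tilde v'|^{2+\alpha}ds\le1$. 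Taking the Moser profile $\tilde v_n(s)=n^{-1/(2+\alpha)}s$ for $0\le s\le n$ and $\tilde v_n(s)=n^{(1+\alpha)/(2+\alpha)}$ for $s\ge n$ (suitably smoothed, and with $u_n$ extended arbitrarily outside $B^+_{R_0}$, where there is no constraint), one gets on $s\ge n$ that $(1+\tilde v_n)^{b_\alpha}-s\ge n+b_\alpha n^{1/(2+\alpha)}-s$, so the integral is at least $e^{b_\alpha n^{1/(2+\alpha)}}\to\infty$. This is exactly why your proposed fix does not close the gap: Adams' Lemma~\ref{lemma2.1} applied to the shifted profile controls $\int_0^\infty e^{\tilde v^{\,b_\alpha}-s}ds$, but $(1+x)^{b_\alpha}-x^{b_\alpha}\sim b_\alpha x^{1/(1+\alpha)}$ is unbounded, so the estimate does not transfer back to the unshifted function at the \emph{critical} exponent $b_\alpha$; the normalization $u(R_0)=1/T$ cannot be absorbed there.

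The repair is to use the strict gap $b_\beta<b_\alpha$ to absorb the nonzero boundary value, not merely the constant factor: keep the target exponent $b_\beta=\frac{2+\beta}{1+\beta}$, write $b|u|^{b_\beta}\le T^{b_\beta}|u|^{b_\beta}=v^{b_\beta}=(1+\tilde v)^{b_\beta}$, use the elementary bound $(1+x)^{b_\beta}\le x^{b_\alpha}+C(\alpha,\beta)$ for all $x\ge0$ (valid precisely because $b_\beta<b_\alpha$), and only then apply Lemma~\ref{lemma2.1} to $\tilde v$; this gives the stated conclusion with a constant depending only on $\alpha,\beta$. That ordering is essentially the paper's route: it changes variables first and compares exponents inside the $\int(\cdot)e^{-s}ds$ integral rather than reducing to Theorem~\ref{theorem1.1} for a function that does not vanish on $\partial B^+_{R_0}$ (though the paper itself is cavalier about $v(0)=1$ when it invokes Adams' lemma in \eqref{2.15}, and the same shift-absorption is what is really needed to justify that step).
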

\begin{proof}
Write~$b_{\beta}=\frac{2+\beta}{1+\beta}$.~Using~\eqref{2.1}~and~\eqref{2.2},~we obtain
\begin{equation*}
\displaystyle{\int_{B^{+}_{R_{0}}}e^{a|u|^{b_{\beta}}}t^{\beta}dxdt=c_{\beta}\int_{0}^{R_{0}}\left(e^{a|u|^{b_{\beta}}}-1\right)\rho^{1+\beta}d\rho},
\end{equation*}
and
\begin{equation*}
\displaystyle{\int_{B^{+}_{R_{0}}}|\nabla u|^{2+\alpha}t^{\alpha}dxdt=c_{\alpha}\int_{0}^{R_{0}}|u'(\rho)|^{2+\alpha}\rho^{1+\alpha}d\rho}.~
\end{equation*}
We define~$v(s)=Tu(R_{0}e^{-\frac{s}{2+\beta}})$.~Then
\begin{align*}
\displaystyle{\int_{0}^{R_{0}}\left(e^{a|u|^{b_{\beta}}}-1\right)\rho^{1+\beta}d\rho}=\displaystyle{\frac{{R_{0}}^{2+\beta}}{2+\beta}\int^{\infty}_{0}\left( e^{\frac{a}{T^{b_{\beta}}}|v(s)|^{b_{\beta}}}-1\right)e^{-s}ds}.~
\end{align*}
By choosing
\begin{align*}
a(r,s)=\begin{cases}
1,&\text{0$\leq r\leq s$},\\0,&\text{$s<r$},\\
\end{cases}
\end{align*}
~$p'=b_{\alpha}$,~$\phi(r)=v'(r)$~and using Lemma~\ref{lemma2.1},~we obtain
\begin{align*}
\sup\limits_{s>0}\int_{s}^{\infty}a(r,s)^{b_{\alpha}}dr=0<\infty,
\end{align*}
and
\begin{align*}
\int_{0}^{\infty}e^{-F(s)}ds=\int_{0}^{\infty}e^{\displaystyle{-s+\left(\int_{0}^{+\infty}a(r,s)\phi(r)ds\right)^{b_{\alpha}}}}ds,
\end{align*}
where
\begin{align*}
F(s)=s-\displaystyle{{\left(\int_{0}^{\infty}a(r,s)\phi(r)dr\right)}^{b_{\alpha}}}.
\end{align*}
In view of
\begin{align*}
\displaystyle{\int_{0}^{+\infty}a(r,s)\phi(r)dr}=\int_{0}^{s}\phi(r)dr=\int_{0}^{s}v'(r)dr=v'(s),
\end{align*}
we find that there exists a constant~$c_{0}=c(\alpha,\beta)$~such that
\begin{align}\label{2.15}
\displaystyle{\frac{1}{2+\beta}\int_{0}^{\infty}\left(e^{|v(s)|^{b_{\alpha}}}-1\right)e^{-s}ds\leq c_{0}}.
\end{align}
Because~$\alpha<\beta$,~$v(s)={((2+\beta){c_{\alpha}}^{\frac{1}{1+\alpha}})}^{\frac{1}{b}}u(R_{0}e^{-\frac{s}{2+\beta}})$,~$R_{0}e^{-\frac{s}{2+\beta}}\leq R_{0}$~and~$v$~is an increasing function,~we obtain~$v(s)>1$~and~$\frac{2+\alpha}{1+\alpha}>\frac{2+\beta}{1+\beta}$.~By choosing~$b_{\alpha,\beta}=T^{b_{\beta}}$,~we have
\begin{align*}
&\quad\ \displaystyle{\int_{B^{+}_{R_{0}}}\left(e^{a{|u|}^{b_{\beta}}}-1\right)t^{\beta}dxdt}&\\
&=\displaystyle{c_{\beta}\int_{0}^{R_{0}}\left(e^{a{|u|}^{b_{\beta}}}-1\right)\rho^{1+\beta}d\rho}\\
&=\displaystyle{\frac{c_{\beta}R_{0}^{2+\beta}}{2+\beta}\int_{0}^{\infty}\left(e^{\frac{a}{T^{b_{\beta}}}{|v(s)|}^{b\beta}}-1\right)e^{-s}ds}&\\
&\displaystyle{\leq\frac{c_{\beta}R_{0}^{2+\beta}}{2+\beta}\int_{0}^{\infty}\left(e^{{|v(s)|}^{b_{\beta}}}-1\right)e^{-s}ds}\\
&\displaystyle{\leq\frac{c_{\beta}R_{0}^{2+\beta}}{2+\beta}\int_{0}^{\infty}\left(e^{{|v(s)|}^{b_{\alpha}}}-1\right)e^{-s}ds}\\
&=c_{\beta}R_{0}^{2+\beta}c_{0}
\end{align*}
~where~$a\leq b_{\alpha,\beta}$.~It follows from~\eqref{2.9}~and~\eqref{2.11}~that
\begin{align*}
c_{\beta}R_{0}^{2+\beta}c_{0}=(2+\beta)c_{0}\displaystyle{\int_{B_{R_{0}}^{+}}t^{\beta}dxdt}.
\end{align*}
Therefore,~
\begin{align*}
\displaystyle{\int_{B^{+}_{R_{0}}}\left(e^{a{|u|}^{b_{\beta}}}-1\right)t^{\beta}dxdt}
\leq(2+\beta)c_{0}\displaystyle{\int_{B_{R_{0}}^{+}}t^{\beta}dxdt}=c_{0}R_{0}^{2+\beta}c_{\beta}.
\end{align*}
\end{proof}
\begin{remark}
We conjecture that this Trudinger type inequality on the upper half space for two dimensions has a optimal constant.~But it is a challenge to obtain this by using classical methods.~We leave it as a future problem.
\end{remark}

\section{Concentration-compactness principle}
\begin{lemma}\rm{(Lions 1985~\cite{L1985})}\label{lemma 3.1}.~Suppose~$\{u_{n}\}\subset W_{0}^{1,N}(\Omega)$~is a sequence satisfies that~$\|\nabla u_{n}\|_{n}\leq1$,~$u_{n}\rightarrow u$~and~$|\nabla u(x)|^{n}dx\rightarrow d\mu$~weakly.~Then there exists either~$\alpha>0$~such that ~$e^{(\alpha_{n}+\alpha){|u_{n}|}^{\frac{n}{n-1}}}$~is bounded in~$L^{1}(\Omega)$~for~$u\not\equiv0$~and thus as~$n\rightarrow\infty$,~
\begin{align*}
e^{\alpha_{n}{|u_{n}|}^{\frac{n}{n-1}}}\mathop{\rightarrow}e^{\alpha_{n}{|u|}^{\frac{n}{n-1}}}~in~L^{1}(\Omega)
\end{align*}
or~$u=\delta_{x_{0}}$~for some~$x_{0}\in\overline{\Omega}$~and~$u_{n}\mathop{\rightarrow}0$,~$e^{(\alpha_{n}{|u_{n}|}^{\frac{n}{n-1}})}\mathop{\rightarrow}c\delta_{x_{0}}$~for some~$c\geq0$.~
\end{lemma}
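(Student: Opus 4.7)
My plan is to split the analysis according to whether the weak limit $u$ vanishes or not, and in each case to combine a Brezis--Lieb decomposition of the Dirichlet norm with Moser's sharp inequality so as to extract exponential integrability strictly beyond the critical exponent $\alpha_n$.

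Assume first $u\not\equiv 0$. By weak lower semicontinuity of the norm, $c:=\|\nabla u\|_n^n\in(0,1]$, and the Brezis--Lieb identity gives $\|\nabla(u_k-u)\|_n^n\le 1-c+o(1)$. Fix $\theta\in(0,1)$ with $1-c<\theta^n$; then for large $k$ one has $\|\nabla((u_k-u)/\theta)\|_n\le 1$, so Moser's inequality yields
$$\sup_k\int_\Omega e^{\alpha_n\theta^{-n/(n-1)}|u_k-u|^{n/(n-1)}}\,dx<\infty.$$
Combining this with the elementary convex inequality $|u_k|^{n/(n-1)}\le(1+\varepsilon)|u_k-u|^{n/(n-1)}+C_\varepsilon|u|^{n/(n-1)}$, a H\"older split together with Moser's inequality applied to $u$ itself produces some $\alpha>0$ for which $e^{(\alpha_n+\alpha)|u_k|^{n/(n-1)}}$ is bounded in $L^1(\Omega)$. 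Since $u_k\to u$ almost everywhere along a subsequence (Rellich--Kondrachov), the family $e^{\alpha_n|u_k|^{n/(n-1)}}$ is equi-integrable, and Vitali's theorem gives $e^{\alpha_n|u_k|^{n/(n-1)}}\to e^{\alpha_n|u|^{n/(n-1)}}$ in $L^1(\Omega)$.

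Now assume $u\equiv 0$, and let $\mu$ denote the weak-$*$ limit of $|\nabla u_k|^n\,dx$. For any $x_0\in\overline\Omega$ with $\mu(\{x_0\})<1$, choose $r>0$ small so that $\mu(\overline{B_r(x_0)})<1$ and pick a cutoff $\phi\in C_c^\infty(B_r(x_0))$ with $\phi\equiv 1$ near $x_0$. Strong $L^n$-convergence $u_k\to 0$ together with the bound on $\mu(\overline{B_r(x_0)})$ forces $\int|\nabla(\phi u_k)|^n\,dx<1$ eventually, so the Moser/Vitali argument from the first case, applied to $\phi u_k$, rules out any blow-up of $e^{\alpha_n|u_k|^{n/(n-1)}}$ near $x_0$. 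Since $\mu(\overline\Omega)\le 1$, at most one point can carry full mass, and hence either the exponentials are bounded in $L^1$ and converge to $1$, or they concentrate as a Dirac mass at a unique point $x_0$.

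The hard part is the Brezis--Lieb step: producing an honest gap $\alpha>0$ requires $\theta<1$ \emph{strictly}, and then a careful H\"older split of the exponential of a sum whose parameters $\theta$, $\varepsilon$, and $C_\varepsilon$ must together leave room beyond $\alpha_n$. In the second case the delicate point is converting the local energy bound on $\phi u_k$ into the localization of the concentration measure $\mu$, which requires handling the cross terms $(\nabla\phi)u_k$ via the strong $L^n$-convergence $u_k\to 0$.
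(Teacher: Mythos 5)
The paper never proves this lemma: it is quoted (in somewhat garbled form) from Lions 1985 and used only as motivation for the weighted analogue, Lemma 3.3, which the author proves by a one-dimensional reduction. So there is no in-paper argument to compare yours with, and your proposal must stand on its own. Its skeleton is the classical one and is essentially Lions' original route: the splitting $|u_k|^{n/(n-1)}\le(1+\varepsilon)|u_k-u|^{n/(n-1)}+C_\varepsilon|u|^{n/(n-1)}$, H\"older plus Moser, Vitali via a.e.\ convergence, and a cutoff/covering argument in the case $u\equiv 0$. Note also that the statement in the paper should read $\mu=\delta_{x_0}$ (the weak-$*$ limit of $|\nabla u_k|^n\,dx$), not $u=\delta_{x_0}$; your reading of the intended statement is the correct one, although in the concentration case the weak-$*$ limit of $e^{\alpha_n|u_k|^{n/(n-1)}}$ necessarily retains an absolutely continuous part (the integrand is $\ge 1$), so ``$\to c\delta_{x_0}$'' has to be understood accordingly.

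The one step you should not treat as free is precisely the one you label ``the Brezis--Lieb step.'' The Brezis--Lieb identity for $\|\nabla(u_k-u)\|_n^n$ requires almost everywhere convergence of $\nabla u_k$ to $\nabla u$, which does not follow from weak convergence in $W_0^{1,n}$; weak lower semicontinuity only gives $\|\nabla u\|_n\le\liminf\|\nabla u_k\|_n$, i.e.\ information in the wrong direction. Without $\limsup_k\|\nabla(u_k-u)\|_n^n\le 1-c+o(1)$ (or at least a strict bound $\limsup_k\|\nabla(u_k-u)\|_n<1$) you cannot run Moser on $(u_k-u)/\theta$, and this is exactly the delicate point of Lions' lemma. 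For $n=2$ the gap closes immediately, since $\|\nabla(u_k-u)\|_2^2=\|\nabla u_k\|_2^2-2\int\nabla u_k\cdot\nabla u+\|\nabla u\|_2^2$ and weak convergence alone gives the limsup bound $1-\|\nabla u\|_2^2$; for general $n$ you must either justify a.e.\ convergence of the gradients or replace Brezis--Lieb by a uniform-convexity/Clarkson-type argument, and this needs to be said explicitly. A smaller inaccuracy: ``Moser's inequality applied to $u$ itself'' does not directly give integrability of $e^{tC_\varepsilon|u|^{n/(n-1)}}$ for the large constants your H\"older split requires, since Moser only reaches $\alpha_n$ under a unit norm constraint; the correct statement is that for a \emph{fixed} $u\in W_0^{1,n}(\Omega)$ one has $e^{t|u|^{n/(n-1)}}\in L^1(\Omega)$ for every $t>0$, proved by splitting $u$ into a bounded piece and a piece of small gradient norm. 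With these two points repaired, the first case is complete, and your second case (cutoff $\phi u_k$, cross terms $(\nabla\phi)u_k$ controlled by the strong $L^n$ convergence $u_k\to 0$, covering argument when $\mu$ is not a unit Dirac) is the standard and correct dichotomy argument.
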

\begin{definition}$(\text{Concentration})$\label{definition 3.2}
Let~$B^{+}_{\delta}$~be the two-dimensional unit upper hemisphere centered at the origin with radius~$\delta$.~If~$\|u\|_{D_{0,\alpha,rad}^{1,2+\alpha}}\leq1$~and for any~$0<\delta<1$,~$\int_{B^{+}\backslash B^{+}_{\delta}}|\nabla u_{k}|^{2+\alpha}t^{\alpha}\rightarrow0$,~then the function sequence~$u_{k}\in D_{0,\alpha,rad}^{1,2+\alpha}$~is said to be concentrated at the origin denoted by~$|\nabla u_{k}|^{2+\alpha}t^{\alpha}\hookrightarrow\delta_{0}$.~
\end{definition}
\begin{lemma}\label{lemma 3.3}
Suppose~$\tilde{f}_{m}$~is a sequence in~${D_{0,\alpha,rad}^{1,2+\alpha}(B^{+})}$~such that~$\tilde{f}_{m}\rightharpoonup \tilde{f}$~as~$m\rightarrow+\infty$,~$\|\tilde{f}_{m}\|_{D_{0,\alpha,rad}^{1,2+\alpha}(B^{+})}\leq1$.~Subsequently for any subsequence,~either
\begin{enumerate}
\item[\rm{(i)}]$J(\tilde{f}_{m})\rightarrow J(\tilde{f}),~as~m\rightarrow+\infty$
\end{enumerate}
or
\begin{enumerate}
\item[\rm{(ii)}]$\tilde{f}_{m}~\text{concentrates at x=0}.$
\end{enumerate}
\end{lemma}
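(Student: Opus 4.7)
The plan is to follow the classical Lions concentration-compactness dichotomy adapted to the weighted radial upper-half-space setting, passing to a subsequence and treating separately the cases $\tilde{f}\neq 0$ and $\tilde{f}=0$. First I would extract a subsequence with $\tilde{f}_m\to\tilde{f}$ a.e.\ on $B^{+}$ using compactness of the weighted Sobolev embedding on each annulus $B^{+}\setminus B_\delta^+$, and record the radial Strauss-type bound $|\tilde{f}_m(r)|\leq c_\alpha^{-1/(2+\alpha)}(\log(1/r))^{1/b_\alpha}$ that follows from H\"older applied to $\tilde{f}_m(r)=-\int_r^1 \tilde{f}_m'(s)\,ds$ with the weight $s^{1+\alpha}$; in particular the family $\{\tilde{f}_m\}$ is uniformly bounded on every annulus $B^{+}\setminus B_\delta^+$.

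For the case $\tilde{f}\neq 0$, the Brezis--Lieb lemma for the weighted $L^{2+\alpha}(t^\alpha)$ norm of gradients gives $\|\nabla(\tilde{f}_m-\tilde{f})\|_{L^{2+\alpha}(t^\alpha)}^{2+\alpha}\leq 1-\|\nabla\tilde{f}\|_{L^{2+\alpha}(t^\alpha)}^{2+\alpha}+o(1)$, which is strictly below $1$ for large $m$. Using the convexity inequality $(a+b)^{b_\alpha}\leq(1+\epsilon)a^{b_\alpha}+C_\epsilon b^{b_\alpha}$ to split $e^{a_{\alpha,\beta}|\tilde{f}_m|^{b_\alpha}}\leq e^{(1+\epsilon)a_{\alpha,\beta}|\tilde{f}_m-\tilde{f}|^{b_\alpha}}\,e^{C_\epsilon a_{\alpha,\beta}|\tilde{f}|^{b_\alpha}}$ and applying H\"older with exponents $q,q'$, I would choose $\epsilon$ small and $q>1$ close to $1$ so that the first factor is uniformly bounded in $L^{1}(t^\beta)$ by Theorem \ref{theorem1.1} applied to the normalized difference $(\tilde{f}_m-\tilde{f})/\|\nabla(\tilde{f}_m-\tilde{f})\|_{L^{2+\alpha}(t^\alpha)}$, while the second factor is finite because $\tilde{f}\in D_{0,\alpha,rad}^{1,2+\alpha}(B^{+})$. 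This uniform integrability, together with a.e.\ pointwise convergence, yields $J(\tilde{f}_m)\to J(\tilde{f})$ by Vitali's theorem, which is alternative (i).

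In the remaining case $\tilde{f}=0$, either (ii) already holds or, denying it, there exist $\delta\in(0,1)$ and $\eta>0$ with $\int_{B^{+}\setminus B_\delta^+}|\nabla \tilde{f}_m|^{2+\alpha}t^\alpha\,dxdt\geq\eta$ along a subsequence, so $\int_{B_\delta^+}|\nabla \tilde{f}_m|^{2+\alpha}t^\alpha\,dxdt\leq 1-\eta$. On the outer annulus the Strauss bound plus dominated convergence yields $\int_{B^{+}\setminus B_\delta^+}(e^{a_{\alpha,\beta}|\tilde{f}_m|^{b_\alpha}}-1)t^\beta\,dxdt\to 0$. On $B_\delta^+$ I would subtract the bounded boundary value $\tilde{f}_m(\delta)\to 0$ to obtain a radial modification vanishing on $\partial B_\delta^+$ with $L^{2+\alpha}(t^\alpha)$-gradient at most $(1-\eta)^{1/(2+\alpha)}$, apply Theorem \ref{theorem1.1} to a slightly rescaled version to gain a supercritical exponent and hence uniform integrability, and conclude again by Vitali that $\int_{B_\delta^+}(e^{a_{\alpha,\beta}|\tilde{f}_m|^{b_\alpha}}-1)t^\beta\,dxdt\to 0$. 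Combining both pieces yields $J(\tilde{f}_m)\to 0=J(\tilde{f})$, which is alternative (i). The main obstacle is this last step: the boundary correction and the scaling factor strictly larger than $1$ must be arranged to respect both the zero-trace hypothesis of Theorem \ref{theorem1.1} and the integrability margin needed by H\"older; one also has to justify the weighted Brezis--Lieb identity, which ultimately requires a.e.\ convergence of $\nabla\tilde{f}_m$ (obtainable from a Rellich-type argument on annuli together with the monotonicity of radial profiles).
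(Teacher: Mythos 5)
Your route is genuinely different from the paper's, and its core trouble spot is exactly the step you flag. The paper never splits into the cases $\tilde f\neq0$ / $\tilde f=0$ and never invokes a Brezis--Lieb identity or a Lions-type improvement. It simply negates (ii): after the change of variables $r=e^{-\frac{s}{2+\beta}}$, $f_m(s)=(2+\beta)\left(\frac{c_\alpha}{2+\beta}\right)^{\frac{1}{2+\alpha}}\tilde f_m$, non-concentration gives some $A>0$ and $k\in(0,1)$ with $\int_0^A|f_m'|^{2+\alpha}ds\geq k$; H\"older then yields the pointwise subcritical bound $f_m^{\frac{2+\alpha}{1+\alpha}}(s)\leq A+s\left(1-\frac k2\right)^{\frac{1}{1+\alpha}}$ for large $s$, and since $J(\tilde f_m)$ reduces to $\int_0^\infty\bigl(e^{|f_m|^{\frac{2+\alpha}{1+\alpha}}}-1\bigr)e^{-s}ds$ up to a constant, dominated convergence on $[0,n]$ plus the exponentially small tail $e^{A}\int_n^\infty e^{\bigl((1-k/2)^{1/(1+\alpha)}-1\bigr)s}ds$ gives (i). Your second branch (deny (ii), uniform control on the outer annulus, boundary-value subtraction and a supercritical exponent on $B_\delta^+$ via Theorem \ref{theorem1.1}, then Vitali) is essentially this same idea carried out in Euclidean coordinates, and it is sound modulo routine details.

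The genuine gap is your first branch. The weighted Brezis--Lieb decomposition $\|\nabla(\tilde f_m-\tilde f)\|^{2+\alpha}\leq 1-\|\nabla\tilde f\|^{2+\alpha}+o(1)$ requires a.e.\ convergence of the gradients $\nabla\tilde f_m$, which weak convergence does not provide; your proposed justification (Rellich-type compactness on annuli together with ``monotonicity of radial profiles'') does not close it, because compact embeddings give a.e.\ convergence of the functions only, and the $\tilde f_m$ in Lemma \ref{lemma 3.3} are radial but not assumed monotone (no rearrangement has been performed). Since $2+\alpha\neq2$, the needed bound also cannot be recovered from the Hilbert-space expansion or from Clarkson-type inequalities, so the engine of the uniform integrability in that branch is unproven. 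Fortunately the branch is dispensable: if $\tilde f\neq0$, then (ii) cannot hold anyway, since concentration together with weak lower semicontinuity on each annulus would force $\nabla\tilde f=0$ and hence $\tilde f=0$; so you may run your second-branch argument for an arbitrary weak limit, noting that the subtracted boundary values $\tilde f_m(\delta)$ remain bounded, which is all the H\"older/Vitali step needs. With that modification your proof becomes complete and is then close in spirit to the paper's; indeed your compactness-on-annuli observation also supplies the pointwise convergence $f_m\to f$ that the paper's dominated-convergence step uses but does not justify.
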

\begin{proof}
Let~$|x|=r=e^{-\frac{s}{2+\beta}}$,~$f(s)=(2+\beta){(\frac{c_{\alpha}}{2+\beta})}^{\frac{1}{2+\alpha}}\tilde{f}(x)$.~Then
\begin{align*}
{|\nabla\tilde{f}|}^{2+\alpha}={|\tilde{f}_{r}|}^{2+\alpha}={\left|\frac{1}{2+\beta}{\left(\frac{2+\beta}{c_{\alpha}}\right)}^{\frac{1}{2+\alpha}}f_{s}\cdot(2+\beta)(-e^{\frac{s}{2+\beta}})\right|}^{2+\alpha}=\frac{2+\beta}{c_{\alpha}}{|f_{s}|}^{2+\alpha}e^{\frac{s(2+\alpha)}{2+\beta}},
\end{align*}
~$dr=e^{-\frac{s}{2+\beta}}\cdot(-\frac{1}{2+\beta})ds$.~
Therefore,~
\begin{align*}
&\quad\ \displaystyle{c_{\alpha}\int_{\delta}^{1}{|\tilde{f}_{r}|}^{2+\alpha}r^{1+\alpha}dr}&\\
&=c_{\alpha}\int_{-(2+\beta)\ln\delta}^{0}\frac{2+\beta}{c_{\alpha}}{|f_{s}|}^{2+\alpha}e^{\frac{s(2+\alpha)}{2+\beta}}e^{-\frac{s(1+\alpha)}{2+\beta}}e^{-\frac{s}{2+\beta}}\left(-\frac{1}{2+\beta}\right)ds\\
&=c_{\alpha}\int_{0}^{-(2+\beta)\ln\delta}\frac{2+\beta}{c_{\alpha}}\frac{1}{2+\beta}{|f_{s}|}^{2+\alpha}ds\\
&=\int_{0}^{-(2+\beta)\ln\delta}{|f_{s}|}^{2+\alpha}ds,
\end{align*}
and
\begin{align*}
\displaystyle{\int_{B^{+}/B_{\delta}}{|\nabla\tilde{f}|}^{2+\alpha}t^{\alpha}dxdt}=\displaystyle{c_{\alpha}\int_{\delta}^{1}{|\tilde{f}_{r}|}^{2+\alpha}r^{1+\alpha}dr}=\int_{0}^{-(2+\beta)\ln\delta}{|f_{s}|}^{2+\alpha}ds.
\end{align*}
Choosing~$A=-(2+\beta)\ln\delta$,~we obtain~$\delta=e^{-\frac{A}{2+\beta}}$.~Then
\begin{align*}
\displaystyle{\int_{B^{+}/B_{\delta}}{|\nabla \tilde{f}|}^{2+\alpha}t^{\alpha}dxdt}=\displaystyle{\int_{0}^{A}{|f_{s}|}^{2+\alpha}ds}.
\end{align*}
We want to prove that~$(\mathbf{i})$~is true if we suppose that~$(\mathbf{ii})$~is false.

$(\mathbf{ii})$~falses that means the sequence does not concentrate at ~$x=0$.~Then there exists some~$A>0$~and~$0<k<1$~with
\begin{align*}
\displaystyle{\int_{0}^{A}{|f_{m}'|}^{2+\alpha}ds\geq k}.
\end{align*}

For~$m\geq m_{0}$~and~$s\geq A$,~
using the~$\rm{H\ddot{o}lder}$~inequality,~we obtain
\begin{align}\label{3.1}
&f_{m}(s)-f_{m}(A)=\displaystyle{\int_{A}^{s}f_{m}'(\tau)d\tau}&\notag\\
&\quad\quad\quad\quad\quad\quad=\displaystyle{\int_{A}^{s}f_{m}'(\tau)\cdot1d\tau}\notag\\
&\quad\quad\quad\quad\quad\quad\leq{\left(\displaystyle{\int_{A}^{s}{|f'_{m}|}^{2+\alpha}}d\tau\right)}^{\frac{1}{2+\alpha}}{\left(\displaystyle{\int_{A}^{s}{1}^{\frac{2+\alpha}{1+\alpha}}}d\tau\right)}^{\frac{1+\alpha}{2+\alpha}}\notag\\
&\quad\quad\quad\quad\quad\quad={{(s-A)}^{\frac{1+\alpha}{2+\alpha}}\left(\displaystyle{\int_{A}^{s}{|f'_{m}|}^{2+\alpha}d\tau}\right)}^{\frac{1}{2+\alpha}}&\notag\\
&\quad\quad\quad\quad\quad\quad\leq{(s-A)}^{\frac{1+\alpha}{2+\alpha}}{(1-k)}^{\frac{1}{2+\alpha}}\notag\\
&\quad\quad\quad\quad\quad\quad\leq s^{\frac{1+\alpha}{2+\alpha}}{(1-k)}^{\frac{1}{2+\alpha}}.
\end{align}
 Since the value of~$f_{m}(0)$~corresponds to~$\tilde{f}_{m}(1)$~and~$\tilde{f}_{m}$~has compact support,~we obtain~$f_{m}(0)=0$.~Using~$\rm{H\ddot{o}lder}$~inequality,~we obtain
\begin{align*}
&f_{m}(A)=\displaystyle{\int_{0}^{A}f_{m}'(\tau)d\tau}\\
&\quad\quad\quad\displaystyle{\leq{\left(\int_{0}^{A}{|f_{m}'|}^{2+\alpha}d\tau\right)}^{\frac{1}{2+\alpha}}{{\left(\int_{0}^{A}1d\tau\right)}^{\frac{1+\alpha}{2+\alpha}}}}~\\
&\quad\quad\quad\displaystyle{={A^{\frac{1+\alpha}{2+\alpha}}\left(\int_{0}^{A}{|f_{m}'|}^{2+\alpha}d\tau\right)}^{\frac{1}{2+\alpha}}}.
\end{align*}
By constraint condition,~we obtain
\begin{align*}
\displaystyle{\int_{0}^{A}{|f_{m}'(\tau)|}^{2+\alpha}d\tau\leq1}.~
\end{align*}
Therefore,~
\begin{align}\label{3.2}
f_{m}(A)\leq A^{\frac{1+\alpha}{2+\alpha}}.~
\end{align}

In the last step,~the following inequality is used.~This can be easily obtained by using L'Hopital's rule:~if~$0<\gamma<\mu$,~$p>1$,~then for a sufficiently large~$y\in\mathbb{R}$,~one has
\begin{align*}
{(1+\gamma y)}^{p}\leq1+\mu^{p}y^{p}.
\end{align*}
Where~$y=s^{\frac{1+\alpha}{2+\alpha}}$,~$p=\frac{2+\alpha}{1+\alpha}$,~$\gamma=A^{-\frac{1+\alpha}{2+\alpha}}{(1-k)}^{\frac{1}{2+\alpha}}$~and~$\mu=A^{-\frac{1+\alpha}{2+\alpha}}{(1-\frac{k}{2})}^{\frac{1}{2+\alpha}}$.~\\
By~\eqref{3.1}~and~\eqref{3.2},~we obtain
\begin{align}\label{3.3}
&{f_{m}^{\frac{2+\alpha}{1+\alpha}}(s)}\leq\displaystyle{{\left({(1-k)}^{\frac{1}{2+\alpha}}s^{\frac{1+\alpha}{2+\alpha}}+A^{\frac{1+\alpha}{2+\alpha}}\right)}^{\frac{2+\alpha}{1+\alpha}}}\notag\\
&\quad\quad\quad\ ={\left(1+A^{-\frac{1+\alpha}{2+\alpha}}{(1-k)}^{\frac{1}{2+\alpha}}s^{\frac{1+\alpha}{2+\alpha}}\right)}^{\frac{2+\alpha}{1+\alpha}}{\left(A^{\frac{1+\alpha}{2+\alpha}}\right)}^{\frac{2+\alpha}{1+\alpha}}\notag\\
&\quad\quad\quad\ \leq A+s{\left(1-\frac{k}{2}\right)}^{\frac{1}{1+\alpha}}.
\end{align}
Therefore~for a sufficiently large~$s$,~${f_{m}^{\frac{2+\alpha}{1+\alpha}}(s)}\leq A+s{(1-\frac{k}{2})}^{\frac{1}{1+\alpha}}$.~Because~$a_{\alpha,\beta}=(2+\beta){c_{\alpha}}^{\frac{1}{1+\alpha}}$,~we obtain
\begin{align}\label{3.4}
&\quad\ J(\tilde{f}_{m})&\notag\\
&=\frac{1}{(2+\beta)c_{0}m_{\beta}(B^{+})}\displaystyle{\int_{B^{+}}\left(e^{a_{\alpha,\beta}{|\tilde{f}_{m}|}^{\frac{2+\alpha}{1+\alpha}}}-1\right)t^{\beta}}dxdt\notag\\
&=\frac{1}{(2+\beta)c_{0}m_{\beta}(B^{+})}\displaystyle{\int_{0}^{\pi}\int_{0}^{1}\left(e^{a_{\alpha,\beta}{|\tilde{f}_{m}|}^{\frac{2+\alpha}{1+\alpha}}}-1\right){(r\sin\theta)}^{\beta}r}drd\theta&\notag\\
&=\frac{1}{(2+\beta)c_{0}m_{\beta}(B^{+})}\displaystyle{\int_{0}^{\pi}\sin^{\beta}\theta d\theta\int_{0}^{1}\left(e^{a_{\alpha,\beta}{|\tilde{f}_{m}|}^{\frac{2+\alpha}{1+\alpha}}}-1\right)r^{\beta+1}}dr\notag\\
&=\frac{1}{(2+\beta)c_{0}m_{\beta}(B^{+})}\displaystyle{\int_{0}^{\pi}}\sin^{\beta}\theta d\theta\int_{0}^{\infty}\left(e^{a_{\alpha,\beta}{|f_{m}{(2+\beta)}^{-1}{(\frac{2+\beta}{c_{\alpha}})}^{\frac{1}{2+\alpha}}|}^{\frac{2+\alpha}{1+\alpha}}}-1\right){\left(e^{-\frac{s}{2+\beta}}\right)}^{1+\beta}\frac{1}{2+\beta}e^{-\frac{s}{2+\beta}}ds\notag\\
&
=\frac{1}{{(2+\beta)}^{2}c_{0}m_{\beta}(B^{+})}\displaystyle{\int_{0}^{\pi}\sin^{\beta}\theta d\theta\int_{0}^{\infty}\left(e^{a_{\alpha,\beta}{|f_{m}|}^{\frac{2+\alpha}{1+\alpha}}{(2+\beta)}^{-1}{c_{\alpha}}^{-\frac{1}{1+\alpha}}}-1\right)e^{-s}}ds\notag\\
&=\frac{c_{\beta}}{{(2+\beta)}^{2}c_{0}m_{\beta}(B^{+})}\int_{0}^{\infty}\left(e^{{|f_{m}|}^{\frac{2+\alpha}{1+\alpha}}}-1\right)e^{-s}ds.~
\end{align}
Let~$I(f_{m})=\displaystyle{\int_{0}^{\infty}\left(e^{{|f_{m}|}^{\frac{2+\alpha}{1+\alpha}}}-1\right)e^{-s}ds}$.~Now splitting~$I(f_{m})=I_{1}(f_{m})+I_{2}(f_{m})-1$,~we have
\begin{align*}
&I(f_{m})=I_{1}(f_{m})+I_{2}(f_{m})-1\\
&\ \quad\quad=\displaystyle{\int_{0}^{n}\left(e^{{|f_{m}|}^{\frac{2+\alpha}{1+\alpha}}-s}\right)ds+\int_{n}^{\infty}\left(e^{{|f_{m}|}^{\frac{2+\alpha}{1+\alpha}}-s}\right)ds}-1.
\end{align*}
Using~$f_{m}(s)\leq s^{\frac{1+\alpha}{2+\alpha}}$~and the dominated convergence theorem for~$I_{1}(w)$,~we obtain~$I_{1}(f_{m})\rightarrow I_{1}(f)$.~By~\eqref{3.3},~we have
\begin{align*}
I_{2}(f_{m})=\displaystyle{\int_{n}^{\infty}e^{{|f_{m}|}^{\frac{2+\alpha}{1+\alpha}}-s}ds}\leq\displaystyle{\int_{n}^{\infty}e^{-s+A+s{\left(1-\frac{k}{2}\right)}^{\frac{1}{1+\alpha}}}ds}=e^{A}\displaystyle{\int_{n}^{\infty}e^{\left({\left(1-\frac{k}{2}\right)}^{\frac{1}{1+\alpha}}-1\right)s}ds}.
\end{align*}
Because~$0<k<1$,~${(1-\frac{k}{2})}^{\frac{1}{1+\alpha}}-1$~is negative,~we observe that~$I_{2}(f_{m})$~is arbitrarily small when~$n$~is sufficiently large.~We know that~$f_{m}$~converges pointwise to~$f$~as~$m\rightarrow+\infty$~.~Therefore~$I(f_{m})\rightarrow I(f)$~implies that~$J(\tilde{f}_{m})\rightarrow J(\tilde{f})$.~
\end{proof}

\section{Extremal function exists under dynamic change}
First,~we introduce a new idea about the existence of extremal functions for sharp Moser-Trudinger type inequalities under dynamic change.~This means the existence of the extreme function of the inequality is affected by parameter changes in the constraint condition.
\begin{lemma}[Chang 1986~\cite{C1986}]\label{lemma 4.1}~For each~$c>0$,~$\delta_{0}>0$~and~$A_{\delta_{0}}=\{\phi\in C^{1}(0,+\infty)|\phi(0)=0,~\displaystyle{\int_{0}^{\infty}}{|\phi'|}^{2}dt\leq\delta_{0}\}$,~we have
\begin{equation*}
\sup\limits_{\phi\in\Lambda_{\delta_{0}}}\displaystyle{\int^{\infty}_{0}e^{c\phi(t)-t}dt\leq e^{\frac{c^{2}\delta_{0}}{4}+1}}.
\end{equation*}
\end{lemma}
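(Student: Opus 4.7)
I would reduce the estimate in two steps to the Adams integral inequality (Lemma~\ref{lemma2.1}). The full factor $e^{c^{2}\delta_{0}/4}$ in the target bound is produced by a single AM--GM step, and the residual ``$+1$'' in the exponent is what Lemma~\ref{lemma2.1} at $p=p'=2$ is supposed to absorb.

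First, I would normalise the energy constraint. Setting $\psi(t)=\phi(t)/\sqrt{\delta_{0}}$ transforms the hypothesis into $\psi(0)=0$ and $\int_{0}^{\infty}|\psi'|^{2}\,dt\le 1$, while the exponent becomes $c\sqrt{\delta_{0}}\,\psi(t)-t$. The pointwise inequality $(\tfrac{c\sqrt{\delta_{0}}}{2}-\psi(t))^{2}\ge 0$ then yields
\begin{equation*}
c\sqrt{\delta_{0}}\,\psi(t)-t \;\le\; \frac{c^{2}\delta_{0}}{4}+\psi(t)^{2}-t,
\end{equation*}
so that after exponentiating and integrating,
\begin{equation*}
\int_{0}^{\infty}e^{c\phi(t)-t}\,dt\;\le\; e^{c^{2}\delta_{0}/4}\int_{0}^{\infty}e^{\psi(t)^{2}-t}\,dt.
\end{equation*}

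Second, I would apply Lemma~\ref{lemma2.1} with $p=p'=2$, with the kernel $a(r,s)=1$ for $0\le r\le s$ and $0$ otherwise, and with the non--negative source $|\psi'(r)|$. The tail hypothesis $\sup_{s>0}(\int_{s}^{\infty}a(r,s)^{2}\,dr)^{1/2}=0$ is automatic, and the energy bound $\int_{0}^{\infty}|\psi'|^{2}\,dr\le 1$ is exactly the rescaled constraint. The Adams functional satisfies
\begin{equation*}
F(s)=s-\Bigl(\int_{0}^{s}|\psi'(r)|\,dr\Bigr)^{2}\;\le\;s-\psi(s)^{2}
\end{equation*}
by the triangle inequality, so Lemma~\ref{lemma2.1} provides an absolute constant $c_{0}$ with
\begin{equation*}
\int_{0}^{\infty}e^{\psi(s)^{2}-s}\,ds \;\le\; \int_{0}^{\infty}e^{-F(s)}\,ds \;\le\; c_{0}.
\end{equation*}
Combining with the previous display gives $\int_{0}^{\infty}e^{c\phi-t}\,dt\le c_{0}\,e^{c^{2}\delta_{0}/4}$.

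The main obstacle I foresee is verifying that the absolute constant $c_{0}$ coming out of Lemma~\ref{lemma2.1} in this $p=2$ setting can indeed be taken to be at most $e$, which is exactly what is needed to collapse the bound to the stated $e^{c^{2}\delta_{0}/4+1}$. I would handle this either by tracking the constant through Adams' original proof at $p=2$, or by giving a direct hands-on bound on $\int_{0}^{\infty}e^{\psi^{2}-s}\,ds$ via the Cauchy--Schwarz pointwise estimate $\psi(s)^{2}\le s$ together with the substitution $u=\sqrt{s}$, which reduces the tail to an elementary one-dimensional Gaussian-type integral. The structural reduction above is completely routine; only the final bookkeeping of the explicit numerical constant in the absorbed Adams inequality requires care.
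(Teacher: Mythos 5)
Your reduction (normalising to $\int_0^\infty|\psi'|^2\le 1$ and then using $c\sqrt{\delta_0}\,\psi\le \frac{c^2\delta_0}{4}+\psi^2$, followed by the Adams lemma with $p=p'=2$) is fine as far as it goes, but the step you defer as ``bookkeeping'' is not bookkeeping — it is false, and it is precisely where the content of the lemma lies. After the AM--GM decoupling, the best constant you could ever hope for is
\begin{equation*}
C^{*}:=\sup\Bigl\{\int_0^\infty e^{\psi(s)^2-s}\,ds:\ \psi(0)=0,\ \int_0^\infty|\psi'|^2\,ds\le 1\Bigr\},
\end{equation*}
and $C^{*}>e$: the explicit Carleson--Chang function used later in this very paper (the $f$ in the proof of Theorem \ref{theorem1.2}, with $f=s/2$ on $[0,2]$, $f=\sqrt{s-1}$ on $[2,e^2+1]$, $f=e$ beyond) satisfies $\int_0^\infty|f'|^2\,ds=1$ and $\int_0^\infty e^{f^2-s}\,ds>\frac{2.723}{e}+e>e$. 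So the Adams constant $c_0(2,0)$ cannot be taken $\le e$, no matter how carefully you track it through Adams' proof, and your route can only deliver $C^{*}e^{c^2\delta_0/4}$ rather than $e^{c^2\delta_0/4+1}$. Your proposed fallback also fails: the pointwise bound $\psi(s)^2\le s$ only gives $e^{\psi^2-s}\le 1$, which is not integrable on $(0,\infty)$, so no ``Gaussian-type'' tail comes out of it. The loss is not harmless for this paper: Lemma \ref{lemma 4.3} and Lemma \ref{lemma 4.5} need exactly the form $e^{c^2\delta_0/4+1}$, whose key feature is that it tends to $e$ as $\delta_0\to 0$, producing the $(1+e)$-type threshold that the test function value $\frac{2.723}{e}+e$ must exceed in the proof of Theorem \ref{theorem1.2}; replacing $e$ by a larger unspecified $c_0$ would destroy that comparison.

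The genuine difficulty is that the constraint $\int_0^\infty|\phi'|^2\le\delta_0$ must be played off against the \emph{linear} term $c\phi(t)$ and the decay $-t$ simultaneously, not converted into a quadratic term once and for all; crude decouplings (your AM--GM, or $c\phi(t)-t\le \frac{c^2\delta_0}{4\theta}-(1-\theta)t$ and optimising in $\theta$) all produce an extra factor growing with $c^2\delta_0$ or an absolute constant exceeding $e$. Note also that the paper does not prove this lemma at all: it is imported verbatim from Carleson--Chang \cite{C1986}, and any self-contained proof would have to reproduce their sharper argument rather than reduce to Lemma \ref{lemma2.1}.
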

\begin{lemma}\label{lemma 4.2}
For~$\alpha>-1$,~$\varsigma>-1$~and any two functions~$u$~and~$v$~whose domains and ranges are both in the real number field,~we choose~$u=v+L$~where~$L$~is any constant,~$c_{\varsigma,\alpha}={\left(1-{(1+\varsigma)}^{-1-\alpha}\right)}^{-\frac{1}{1+\alpha}}$,~then
\begin{align*}
u^{\frac{2+\alpha}{1+\alpha}}\leq(1+\varsigma)v^{\frac{2+\alpha}{1+\alpha}}+c(\varsigma,\alpha)L^{\frac{2+\alpha}{1+\alpha}}.~ \end{align*}
\end{lemma}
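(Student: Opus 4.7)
The plan is to recognize this as a weighted Young/Jensen inequality in disguise and to derive it from the convexity of the power function $x \mapsto x^{p}$ with $p = \frac{2+\alpha}{1+\alpha}$. Because $\alpha > -1$, we have $1+\alpha > 0$ and $p = 1 + \frac{1}{1+\alpha} > 1$, so $x^{p}$ is convex on $[0,\infty)$. The specific definition
\[
c_{\varsigma,\alpha}=\bigl(1-(1+\varsigma)^{-1-\alpha}\bigr)^{-\frac{1}{1+\alpha}}
\]
is exactly the quantity that appears when one optimizes a convex split so as to put weight $1+\varsigma$ on the $v^{p}$ term; this is the whole point of the lemma and is what will feed into the test-function argument for Theorem \ref{theorem1.2}.

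First, I would reduce to the nonnegative case. Since $u = v+L$ forces $|u| \le |v|+|L|$, and $x \mapsto x^{p}$ is increasing on $[0,\infty)$, it suffices to show
\[
(a+b)^{\frac{2+\alpha}{1+\alpha}} \le (1+\varsigma)\, a^{\frac{2+\alpha}{1+\alpha}} + c(\varsigma,\alpha)\, b^{\frac{2+\alpha}{1+\alpha}}
\]
for $a,b\ge 0$, and then specialize to $a=|v|$, $b=|L|$. This addresses the wording of the lemma, which allows $u,v,L$ to be real.

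Second, I would apply convexity in its standard weighted form: for any $\lambda \in (0,1)$,
\[
(a+b)^{p} = \Bigl(\lambda\!\cdot\!\tfrac{a}{\lambda} + (1-\lambda)\!\cdot\!\tfrac{b}{1-\lambda}\Bigr)^{p} \le \lambda^{1-p} a^{p} + (1-\lambda)^{1-p} b^{p}.
\]
Now I would pin down $\lambda$ by demanding $\lambda^{1-p} = 1+\varsigma$. Using $1-p = -\tfrac{1}{1+\alpha}$, this forces $\lambda = (1+\varsigma)^{-(1+\alpha)}$, which lies in $(0,1)$ because $\varsigma > -1$ and so $1+\varsigma > 0$ (one should observe that in the setting of Theorem \ref{theorem1.2} one also has $\varsigma \in \mathbb{R}^{+}$, making $\lambda \in (0,1)$ automatic). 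With this choice,
\[
(1-\lambda)^{1-p} = \bigl(1 - (1+\varsigma)^{-(1+\alpha)}\bigr)^{-\frac{1}{1+\alpha}} = c_{\varsigma,\alpha},
\]
which identifies $c(\varsigma,\alpha) = c_{\varsigma,\alpha}$ and yields the claimed bound.

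There is no serious obstacle. The only thing to be careful about is the bookkeeping of exponents: since $p-1 = \frac{1}{1+\alpha}$, the exponent in $c_{\varsigma,\alpha}$ must come out as $-\frac{1}{1+\alpha}$, and one should verify that requiring the coefficient of $v^{p}$ to be exactly $1+\varsigma$ (and not some other quantity) is the rigid choice that pins down the sharp constant on $L^{p}$ as $c_{\varsigma,\alpha}$. This matching is precisely what makes the hypotheses $\bigl(\tfrac{1+\alpha}{2+\alpha}\bigr)^{\frac{1+\alpha}{2+\alpha}} 2^{\frac{1}{2+\alpha}} \le \frac{1}{c_{\varsigma,\alpha}}$ and $\frac{1+\varsigma}{c_{\varsigma,\alpha}} < 1$ of Theorem \ref{theorem1.2} interact cleanly with this lemma when it is eventually applied to split a maximizing sequence into its concentrating part and a tail term.
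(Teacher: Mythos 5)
Your proof is correct and is essentially the same argument as the paper's: the paper splits $u=v+L=(1+\varsigma)^{-1/q}\bigl((1+\varsigma)^{1/q}v\bigr)+c^{-1/q}\bigl(c^{1/q}L\bigr)$ and applies the two-term discrete H\"older inequality with exponents $p=2+\alpha$, $q=\tfrac{2+\alpha}{1+\alpha}$, arranging the first factor to equal $1$, which is precisely the dual formulation of your convexity step $(a+b)^{q}\le\lambda^{1-q}a^{q}+(1-\lambda)^{1-q}b^{q}$ with $\lambda=(1+\varsigma)^{-(1+\alpha)}$. Your explicit reduction to nonnegative quantities via $|u|\le|v|+|L|$ is a point the paper leaves implicit, but the decomposition, the choice of weights, and the identification of $c_{\varsigma,\alpha}$ coincide.
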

\begin{proof}
Let~$C$~be any nonzero constant and let~$a_{1}$,~$b_{1}$,~$a_{2}$,~$b_{2}$~be real numbers.~Then
\begin{align*}
&u=v+L=\frac{1}{{(1+\varsigma)}^{\frac{1}{q}}}\left({(1+\varsigma)}^{\frac{1}{q}}v\right)+\frac{1}{c^{\frac{1}{q}}}\left(c^{\frac{1}{q}}L\right).
\end{align*}
Using~H$\rm{\ddot{o}}$lder~inequality:
\begin{align*}
a_{1}b_{1}+a_{2}b_{2}\leq{\left({a_{1}}^{p}+{a_{2}}^{p}\right)}^{\frac{1}{p}}{\left(b_{1}^{q}+{b_{2}^{q}}\right)}^{\frac{1}{q}}
\end{align*}
where~$p=2+\alpha$,~$q=\frac{2+\alpha}{1+\alpha}$,~$c=c(\varsigma,\alpha)={\left(1-{(1+\varsigma)}^{-1-\alpha}\right)}^{-\frac{1}{1+\alpha}}$,~we have
\begin{align*}
&\quad\ \frac{1}{{(1+\varsigma)}^{\frac{1}{q}}}\left({(1+\varsigma)}^{\frac{1}{q}}v\right)+\frac{1}{c^{\frac{1}{q}}}\left(c^{\frac{1}{q}}L\right)\\
&\leq{\left({\left(\frac{1}{{(1+\varsigma)}^{\frac{1}{q}}}\right)}^{p}+{\left(\frac{1}{c^{\frac{1}{q}}}\right)}^{p}\right)}^{\frac{1}{p}}{\left({\left({(1+\varsigma)}^{\frac{1}{q}}v\right)}^{q}+{\left(c^{\frac{1}{q}}L\right)}^{q}\right)}^{\frac{1}{q}}\\
&={\left({\left(\frac{1}{{(1+\varsigma)}^{\frac{1+\alpha}{2+\alpha}}}\right)}^{2+\alpha}+{\left(\frac{1}{c^{\frac{1+\alpha}{2+\alpha}}}\right)}^{2+\alpha}\right)}^{\frac{1}{2+\alpha}}{\left({({(1+\varsigma)}^{\frac{1+\alpha}{2+\alpha}}v)}^{\frac{2+\alpha}{1+\alpha}}+{\left(c^{\frac{1+\alpha}{2+\alpha}}L\right)}^{\frac{2+\alpha}{1+\alpha}}\right)}^{\frac{1+\alpha}{2+\alpha}}.
\end{align*}
Therefore,~
\begin{align*}
u^{\frac{2+\alpha}{1+\alpha}}\leq(1+\varsigma)v^{\frac{2+\alpha}{1+\alpha}}+c(\varsigma,\alpha)L^{\frac{2+\alpha}{1+\alpha}}.
\end{align*}
\end{proof}
Write~$c_{1,\alpha}=\displaystyle{\int_{0}^{\pi}(\sin\theta)^{-\frac{\alpha}{1+\alpha}}d\theta}$,~$\Gamma(\phi)=\displaystyle{{\left(\int_{0}^{+\infty}{|\phi'(s)|}^{2+\alpha}ds\right)}^{\frac{1}{2+\alpha}}}$.~For ~$\delta\in(0,1]$,~$\tilde{\Lambda}_{\delta}:=\{\phi\in C^{1}(0,~\infty)|\phi(0)=0,~\Gamma(\phi)\leq\delta\}$.~The definition here is derived by studying the ideas in~\cite{C1986}.
\begin{lemma}\label{lemma 4.3}
For any~$a>0$,~$\varsigma>0$,~if~$\displaystyle{{\left(\int_{0}^{\infty}{|\phi'(s)|}^{\frac{2+\alpha}{1+\alpha}}ds\right)}^{\frac{1+\alpha}{2+\alpha}}\leq c_{1}}$,~$1-(1+\varsigma){\delta}>0$,~then
\begin{equation*}
\sup\limits_{\phi\in\tilde{\Lambda}_{\delta}}\displaystyle{\int_{\frac{c_{1}^{2}(1+\alpha)a}{16}}^{\infty}e^{\phi(s)^{\frac{2+\alpha}{1+\alpha}}-s}ds}\leq \frac{1}{(1-(1+\varsigma){\delta})}e^{c_{\varsigma,\alpha}\frac{c_{1}\left(1+\alpha\right)}{4}\phi^{\frac{2+\alpha}{1+\alpha}}\left(\frac{c_{1}a}{4}\right)-\frac{c_{1}^{2}\left(1+\alpha\right)a}{16}}
\end{equation*}
\begin{equation*}
^{+\frac{c_{1}{\delta}}{1-(1+\varsigma){\delta}}\frac{\phi\left(\frac{c_{1}a}{4}\right)}{4}+1}.
\end{equation*}
\begin{proof}
Let~$r=e^{-\frac{s}{2+\beta}}$,~$\phi(s)=(2+\beta){\left(\frac{c_{\alpha}}{2+\beta}\right)}^{\frac{1}{2+\alpha}}u(x)$,~$u(x)=\frac{1}{2+\beta}{\left(\frac{2+\beta}{c_{\alpha}}\right)}^{\frac{1}{2+\alpha}}\phi(s)$~and
~$\frac{du}{dr}=\frac{du}{ds}\frac{ds}{dr}=-{\left(\frac{2+\beta}{c_{\alpha}}\right)}^{\frac{1}{2+\alpha}}\phi'(s)e^{\frac{s}{2+\beta}}$.~We obtain
\begin{align*}
\displaystyle{\int_{B^{+}}{|\nabla u|}^{2+\alpha}t^{\alpha}dxdt=\int_{0}^{\infty}{|\phi'|}^{2+\alpha}ds}.
\end{align*}
We change the variable~$z=s-\frac{{c_{1}}^{2}(1+\alpha)a}{16}$,~$\phi(s)=\psi(z)+{\left(\frac{c_{1}(1+\alpha)}{4}\right)}^{\frac{1+\alpha}{2+\alpha}}\phi(\frac{c_{1}a}{4})$.~\\
Since~$\Gamma(\phi)\leq\delta$,
using the~$\rm{H\ddot{o}lder}$~inequality,~we obtain
\begin{align*}
&\displaystyle{\int_{0}^{\infty}|\psi'|^{2}dz}=\displaystyle{\int_{a}^{\infty}|\phi'(s)|^{2}ds}&\\
&\quad\quad\quad\quad\quad\leq\displaystyle{{\left(\int_{a}^{\infty}{|\phi'(s)|}^{2+\alpha}ds\right)}^{\frac{1}{2+\alpha}}{\left(\int_{a}^{\infty}{|\phi'(s)|}^{\frac{2+\alpha}{1+\alpha}}ds\right)}^{\frac{1+\alpha}{2+\alpha}}}\\
&\quad\quad\quad\quad\quad\leq c_{1}{\left(\int_{0}^{\infty}{|\phi'(s)|}^{2+\alpha}ds\right)}^{\frac{1}{2+\alpha}}\\
&\quad\quad\quad\quad\quad\leq c_{1}{\delta}.
\end{align*}
Let~$v_{k}=u_{k}-L$.~Using Lemma~\ref{lemma 4.2},~we obtain
\begin{align*}
{(v_{k}+L)}^{\frac{2+\alpha}{1+\alpha}}\leq(1+\varsigma)v_{k}^{\frac{2+\alpha}{1+\alpha}}+\left(c_{\varsigma,\alpha}^{\frac{1+\alpha}{2+\alpha}}L\right)^{\frac{2+\alpha}{1+\alpha}},~\forall\varsigma>0,
\end{align*}
and
\begin{align*}
&\quad\ [\psi(z)+{\left(\frac{c_{1}(1+\alpha)}{4}\right)}^{\frac{1+\alpha}{2+\alpha}}\phi\left(\frac{c_{1}a}{4}\right)]^{\frac{2+\alpha}{1+\alpha}}\\
&\leq(1+\varsigma)\psi(z)^{\frac{2+\alpha}{1+\alpha}}+{\left({c_{\varsigma,\alpha}}^{\frac{1+\alpha}{2+\alpha}}{\left(\frac{c_{1}(1+\alpha)}{4}\right)}^{\frac{1+\alpha}{2+\alpha}}\phi\left(\frac{c_{1}a}{4}\right)\right)}^{\frac{2+\alpha}{1+\alpha}}\\
&\leq(1+\varsigma)\psi(z)^{\frac{2+\alpha}{1+\alpha}}+{\left({c_{\varsigma,\alpha}}^{\frac{1+\alpha}{2+\alpha}}{\left(\frac{c_{1}(1+\alpha)}{4}\right)}^{\frac{1+\alpha}{2+\alpha}}\phi\left(\frac{c_{1}a}{4}\right)\right)}^{\frac{2+\alpha}{1+\alpha}}+\psi(z){\phi^{\frac{1}{2}}\left(\frac{c_{1}a}{4}\right)}.
\end{align*}
Therefore,~
\begin{align*}
&\quad\ \displaystyle{\int_{\frac{{c_{1}}^{2}(1+\alpha)a}{16}}^{\infty}e^{{\phi(s)}^{\frac{2+\alpha}{1+\alpha}}-s}ds}\\
&=\displaystyle{\int_{0}^{\infty}e^{[\psi(z)+{\left(\frac{c_{1}(1+\alpha)}{4}\right)}^{\frac{1+\alpha}{2+\alpha}}\phi\left(\frac{c_{1}a}{4}\right)]^{\frac{2+\alpha}{1+\alpha}}-z-\frac{c^{2}_{1}(1+\alpha)}{16}a}dz}\\
&\displaystyle{\leq e^{-\frac{{c_{1}}^{2}(1+\alpha)a}{16}}\int_{0}^{\infty}e^{(1+\varsigma)\psi(z)^{\frac{2+\alpha}{1+\alpha}}+{\left({c_{\varsigma,\alpha}}^{\frac{1+\alpha}{2+\alpha}}{\left(\frac{c_{1}(1+\alpha)}{4}\right)}^{\frac{1+\alpha}{2+\alpha}}\phi\left(\frac{c_{1}a}{4}\right)\right)}^{\frac{2+\alpha}{1+\alpha}}+\psi(z){\phi^{\frac{1}{2}}\left(\frac{c_{1}a}{4}\right)}-z}dz}\\
&=e^{-\frac{{c_{1}}^{2}(1+\alpha)a}{16}}e^{c_{\varsigma,\alpha}\frac{c_{1}(1+\alpha)}{4}\phi^{\frac{2+\alpha}{1+\alpha}}\left(\frac{c_{1}a}{4}\right)}\displaystyle{\int_{0}^{\infty}e^{\psi(z)\phi^{\frac{1}{2}}\left(\frac{c_{1}a}{4}\right)-(1-(1+\varsigma)\delta)z}dz}.
\end{align*}
Let~$y=[1-(1+\varsigma){\delta}]z$~and~$\Theta(y)=\psi(z)$.
\begin{align*}
&\displaystyle{\frac{d\Theta(y)}{dy}}=\frac{d\Theta(y)}{dz}\frac{dz}{dy}=\frac{1}{1-(1+\varsigma){{\delta}}}\frac{d\psi(z)}{dz}.
\end{align*}
Since~$\displaystyle{\int_{0}^{\infty}|\psi'|^{2}dz\leq{\delta}c_{1}}$,~we obtain
\begin{align*}
&\quad\ \displaystyle{\int_{0}^{\infty}|\Theta'(y)|^{2}dy}&\\
&=(1-(1+\varsigma){\delta})\int_{0}^{\infty}|\psi'(z)|^{2}\left(\frac{1}{1-(1+\varsigma)\delta}\right)^{2}dz\\
&=\frac{1}{1-(1+\varsigma){\delta}}\int_{0}^{\infty}|\psi'(z)|^{2}dz\\
&\leq\frac{c_{1}{\delta}}{1-(1+\varsigma){\delta}}.
\end{align*}
Subsequently,~$\Theta\in\Lambda_{\frac{c_{1}{\delta}}{1-(1+\varsigma){\delta}}}$.~Therefore,~
\begin{align*}
&\quad\ \displaystyle{\int_{\frac{c_{1}^{2}(1+\alpha)a}{16}}^{\infty}e^{\phi(s)^{\frac{2+\alpha}{1+\alpha}}-s}ds}\\
&\leq\displaystyle{e^{c_{\varsigma,\alpha}\frac{c_{1}(1+\alpha)}{4}\phi^{\frac{2+\alpha}{1+\alpha}}\left(\frac{c_{1}a}{4}\right)-\frac{c_{1}^{2}(1+\alpha)a}{16}}\int_{0}^{\infty}e^{\psi(z){\phi^{\frac{1}{2}}\left(\frac{c_{1}a}{4}\right)}-[1-(1+\varsigma){\delta}]z}dz}\\
&\ \displaystyle{=\frac{1}{(1-(1+\varsigma){\delta})}e^{c_{\varsigma,\alpha}\frac{c_{1}(1+\alpha)}{4}\phi^{\frac{2+\alpha}{1+\alpha}}\left(\frac{c_{1}a}{4}\right)-\frac{c^{2}_{1}(1+\alpha)a}{16}}\int_{0}^{\infty}e^{\Theta(y)\phi^{\frac{1}{2}}\left(\frac{c_{1}a}{4}\right)-y}dy}\\
&\leq\displaystyle{\frac{1}{1-(1+\varsigma){\delta}}e^{c_{\varsigma,\alpha}\frac{c_{1}(1+\alpha)}{4}\phi^{\frac{2+\alpha}{1+\alpha}}\left(\frac{c_{1}a}{4}\right)-\frac{c_{1}^{2}(1+\alpha)a}{16}}e^{\frac{c_{1}{\delta}}{1-(1+\varsigma){\delta}}\frac{\phi\left(\frac{c_{1}a}{4}\right)}{4}+1}}.
\end{align*}
In the last step,~we use Lemma~\ref{lemma 4.1}~where~$\delta_{0}=\frac{c_{1}\delta}{1-(1+\varsigma)\delta}$,~$c={\phi^{\frac{1}{2}}(\frac{c_{1}a}{4})}$.~
\end{proof}
\end{lemma}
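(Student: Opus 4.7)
The plan is to reduce the left-hand integral, via two successive changes of variable and one application of the pointwise inequality Lemma~\ref{lemma 4.2}, to an integral of the form $\int_{0}^{\infty}e^{c\Theta(y)-y}\,dy$ that is controlled by Chang's bound (Lemma~\ref{lemma 4.1}).

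First I would shift the integration variable by setting $z=s-\frac{c_{1}^{2}(1+\alpha)a}{16}$ and splitting the function as $\phi(s)=\psi(z)+\bigl(\frac{c_{1}(1+\alpha)}{4}\bigr)^{\frac{1+\alpha}{2+\alpha}}\phi\bigl(\frac{c_{1}a}{4}\bigr)$, so that $\psi(0)=0$ and the factor $e^{-c_{1}^{2}(1+\alpha)a/16}$ immediately comes outside. The point of this split is that H\"older's inequality with conjugate exponents $2+\alpha$ and $\frac{2+\alpha}{1+\alpha}$, combined with the two hypotheses $\bigl(\int_{0}^{\infty}|\phi'|^{\frac{2+\alpha}{1+\alpha}}\bigr)^{\frac{1+\alpha}{2+\alpha}}\le c_{1}$ and $\Gamma(\phi)\le \delta$, then delivers $\int_{0}^{\infty}|\psi'|^{2}\,dz\le c_{1}\delta$, which is precisely the $L^{2}$-gradient regularity Chang's lemma asks for.

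Next I would linearize the exponent $[\psi(z)+L]^{\frac{2+\alpha}{1+\alpha}}$ by invoking Lemma~\ref{lemma 4.2} with $v=\psi(z)$ and $L=\bigl(\frac{c_{1}(1+\alpha)}{4}\bigr)^{\frac{1+\alpha}{2+\alpha}}\phi(\frac{c_{1}a}{4})$, getting $(1+\varsigma)\psi^{\frac{2+\alpha}{1+\alpha}}+c_{\varsigma,\alpha}L^{\frac{2+\alpha}{1+\alpha}}$. The second piece produces exactly the factor $\exp\bigl(c_{\varsigma,\alpha}\frac{c_{1}(1+\alpha)}{4}\phi^{\frac{2+\alpha}{1+\alpha}}(\frac{c_{1}a}{4})\bigr)$ in the stated bound, and can be pulled outside the integral. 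To absorb the remaining $(1+\varsigma)\psi^{\frac{2+\alpha}{1+\alpha}}$, I would dominate it by a linear-in-$\psi$ term of the form $\psi(z)\phi^{1/2}(\frac{c_{1}a}{4})$ together with a modification that turns the coefficient in front of $z$ from $1$ into $1-(1+\varsigma)\delta$. Then the final rescaling $y=[1-(1+\varsigma)\delta]z$, $\Theta(y)=\psi(z)$ gives by the chain rule $\int_{0}^{\infty}|\Theta'|^{2}\,dy\le \frac{c_{1}\delta}{1-(1+\varsigma)\delta}$, so $\Theta\in\Lambda_{c_{1}\delta/(1-(1+\varsigma)\delta)}$ in Chang's sense. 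Applying Lemma~\ref{lemma 4.1} with $c=\phi^{1/2}(\frac{c_{1}a}{4})$ yields the factor $e^{\frac{c_{1}\delta}{1-(1+\varsigma)\delta}\frac{\phi(c_{1}a/4)}{4}+1}$, while the Jacobian of the rescaling produces the prefactor $\frac{1}{1-(1+\varsigma)\delta}$; assembling these pieces reconstitutes the right-hand side of the lemma.

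The main obstacle I expect is the decomposition step where $[\psi+L]^{\frac{2+\alpha}{1+\alpha}}$ has to be arranged so that, after Lemma~\ref{lemma 4.2}, the leftover $(1+\varsigma)\psi^{\frac{2+\alpha}{1+\alpha}}$ is truly dominated by a quantity linear in $\psi$ that Chang's lemma can eat, \emph{and} the coefficient of $z$ in the exponent remains strictly positive. The hypothesis $1-(1+\varsigma)\delta>0$ is crucial here: without it, both the Jacobian rescaling and the resulting geometric-series-type integrability fail simultaneously. A secondary difficulty is the careful Hölder bookkeeping needed to turn the $L^{2+\alpha}$-type control on $\phi'$ into the $L^{2}$-type control on $\psi'$ demanded by Lemma~\ref{lemma 4.1}; this is where the auxiliary constant $c_{1}$ enters and governs the final constants.
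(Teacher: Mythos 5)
Your proposal follows essentially the same route as the paper's proof: the same shift $z=s-\frac{c_{1}^{2}(1+\alpha)a}{16}$ and additive split of $\phi$, the same H\"older reduction to $\int_{0}^{\infty}|\psi'|^{2}\,dz\le c_{1}\delta$, the same linearization via Lemma~\ref{lemma 4.2}, and the same rescaling $y=[1-(1+\varsigma)\delta]z$ feeding into Lemma~\ref{lemma 4.1}. The absorption step you single out as the main obstacle is treated in the paper exactly as you anticipate, by trading $(1+\varsigma)\psi(z)^{\frac{2+\alpha}{1+\alpha}}$ for $(1+\varsigma)\delta z$ plus the linear term $\psi(z)\phi^{\frac{1}{2}}\bigl(\frac{c_{1}a}{4}\bigr)$.
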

We choose~$\tilde{f}_{m}\in{D_{0,\alpha,rad}^{1,2+\alpha}(B^{+})}$~and~$|\nabla\tilde{f}_{m}|^{2+\alpha}t^{\alpha}$~$\rightarrow\delta_{0}$~weakly as~$m\rightarrow+\infty$.~We define~$f_{m}$~by~$|x|=r=e^{-\frac{s}{2+\beta}}$,~$f(s)=(2+\beta){(\frac{c_{\alpha}}{2+\beta})}^{\frac{1}{2+\alpha}}\tilde{f}(x)$.~Then~the following Lemma holds.
\begin{lemma}\label{lemma 4.4}
For any~$\varsigma>0$,~$\alpha>-1$,~$c_{\varsigma,\alpha}{\left(\displaystyle{\int_{0}^{\infty}{|f'_{m}|}^{2+\alpha}ds}\right)}^{\frac{1}{2+\alpha}}\leq1$~and~$J(\tilde{f}_{m})\not\rightarrow J(0)$,~there exits a sequence~$\{a_{m}\}$~satisfying that
\begin{equation}\label{4.1}
c_{\varsigma,\alpha}f_{m}^{\frac{2+\alpha}{1+\alpha}}(a_{m})-a_{m}=-2\ln(a_{m}),
\end{equation}
where the first point in~$[1,~\infty)$~and
~$a_{m}\rightarrow\infty$~as~$m\rightarrow\infty$~.
\end{lemma}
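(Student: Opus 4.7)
The strategy is to realize $a_m$ as the first root on $[1,\infty)$ of the continuous function
\[
h_m(s) \;:=\; c_{\varsigma,\alpha}\, f_m^{\frac{2+\alpha}{1+\alpha}}(s) \;-\; s \;+\; 2\ln s ,
\]
and to show that under the stated hypotheses such a root must exist and must drift to $+\infty$ with $m$. First I record endpoint information for $h_m$. From the hypothesis $c_{\varsigma,\alpha}\bigl(\int_0^\infty |f_m'|^{2+\alpha}\,ds\bigr)^{1/(2+\alpha)}\le 1$ together with H\"older's inequality on $[0,s]$ (compare the estimate in the proof of Lemma~\ref{lemma 3.3}), I obtain $c_{\varsigma,\alpha}f_m^{(2+\alpha)/(1+\alpha)}(s)\le c_{\varsigma,\alpha}^{-1/(1+\alpha)}\,s$ for every $s>0$. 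Because $\varsigma>0$ forces $c_{\varsigma,\alpha}>1$, this yields $h_m(1)\le c_{\varsigma,\alpha}^{-1/(1+\alpha)}-1<0$ and, combined with the elementary bound $-s+2\ln s\to-\infty$, also $h_m(s)\to -\infty$ as $s\to\infty$.

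Next I show that $h_m$ must attain a nonnegative value somewhere on $[1,\infty)$, whereupon the intermediate value theorem delivers the first zero $a_m$. Suppose, for contradiction, that $h_m(s)<0$ on all of $[1,\infty)$; then for every $s\ge 1$,
\[
|f_m(s)|^{\frac{2+\alpha}{1+\alpha}} - s \;<\; -\Bigl(1-\tfrac{1}{c_{\varsigma,\alpha}}\Bigr)s \;-\; \tfrac{2}{c_{\varsigma,\alpha}}\ln s ,
\]
and the crucial positivity $1-1/c_{\varsigma,\alpha}>0$ comes once more from $\varsigma>0$. Fix $\varepsilon>0$ and split
\[
I(f_m) \;:=\; \int_0^\infty\Bigl(e^{|f_m|^{(2+\alpha)/(1+\alpha)}}-1\Bigr)e^{-s}\,ds \;=\; \int_0^A + \int_A^\infty .
\]
Using $(e^X-1)e^{-s}\le e^{X-s}$, the tail is dominated by $\int_A^\infty s^{-2/c_{\varsigma,\alpha}}e^{-(1-1/c_{\varsigma,\alpha})s}\,ds$, which can be made $<\varepsilon/2$ by choosing $A$ large. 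With this $A$ fixed, the concentration hypothesis $|\nabla\tilde f_m|^{2+\alpha}t^\alpha\rightharpoonup\delta_0$ translates, via the change of variables $r=e^{-s/(2+\beta)}$ from the proof of Lemma~\ref{lemma 3.3}, into $\int_0^A|f_m'|^{2+\alpha}\,ds\to 0$; H\"older then forces $f_m\to 0$ uniformly on $[0,A]$, so the bulk piece $\int_0^A$ is $<\varepsilon/2$ for all $m$ large. Thus $I(f_m)\to 0$, and via~\eqref{3.4} this means $J(\tilde f_m)\to 0=J(0)$, contradicting $J(\tilde f_m)\not\to J(0)$. Hence a first zero $a_m\in[1,\infty)$ exists by continuity.

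It remains to see that $a_m\to\infty$. For any fixed $A\ge 1$ the concentration argument used above again yields $f_m\to 0$ uniformly on $[0,A]$, so $h_m(s)\to -s+2\ln s<0$ uniformly on $[1,A]$. Thus for all sufficiently large $m$ one has $h_m<0$ throughout $[1,A]$, forcing $a_m>A$. Since $A$ is arbitrary, $a_m\to\infty$.

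The step I expect to be the main obstacle is the contradiction in the second paragraph: the two smallness arguments---a tail estimate extracted from the sign of $h_m$ and a bulk estimate extracted from concentration---must be combined in the correct order (pick $A$ first to kill the tail, then take $m$ large to kill the bulk), and both rely critically on $1-1/c_{\varsigma,\alpha}>0$, i.e.\ on the hypothesis $\varsigma>0$. Verifying cleanly that these two estimates really combine to give $I(f_m)\to 0$ is the heart of the argument; everything else is softer analysis.
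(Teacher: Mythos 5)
Your proposal is correct and follows essentially the same route as the paper: negativity of $c_{\varsigma,\alpha}f_m^{(2+\alpha)/(1+\alpha)}(s)-s+2\ln s$ near $s=1$ via H\"older and $c_{\varsigma,\alpha}>1$, existence of the first root by contradiction (no root would make the integrand dominated and force $J(\tilde f_m)\to J(0)$), and $a_m\to\infty$ from the concentration hypothesis giving $f_m\to0$ locally uniformly. Your explicit bulk/tail splitting is just a more careful rendering of the paper's dominated-convergence step, not a different argument.
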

\begin{proof}
We proceed our proof into three steps.

Step~1:~$a_{m}$~does not exist on~$[0,1)$.~Using the~H$\rm{\ddot{o}}$lder~inequality,~we obtain
\begin{align}\label{4.2}
&\quad\ c_{\varsigma,\alpha}f_{m}(s)-f_{m}(0)&\notag\\
&=c_{\varsigma,\alpha}\displaystyle{\int_{0}^{s}f'_{m}(\tau)\cdot1d\tau}\notag\\
&\leq c_{\varsigma,\alpha}\displaystyle{\left(\int_{0}^{s}{|f'_{m}|}^{2+\alpha}d\tau\right)}^{\frac{1}{2+\alpha}}\left({\int_{0}^{s}1d\tau}\right)^{\frac{1+\alpha}{2+\alpha}}\notag\\
&\leq c_{\varsigma,\alpha}{s}^{\frac{1+\alpha}{2+\alpha}}\displaystyle{\left(\int_{0}^{\infty}{|f'_{m}|}^{2+\alpha}d\tau\right)}^{\frac{1}{2+\alpha}}.
\end{align}
By~$c_{\varsigma,\alpha}{\left(\displaystyle{\int_{0}^{\infty}{|f'_{m}|}^{2+\alpha}ds}\right)}^{\frac{1}{2+\alpha}}\leq1$~and~\eqref{4.2},~for~$s\geq0$,~we obtain
~$c_{\varsigma,\alpha}f_{m}(s)\leq s^{\frac{1+\alpha}{2+\alpha}}$.~Note that~$\frac{1+\alpha}{2+\alpha}>1$~and~$c_{\varsigma,\alpha}>1$.~
We obtain
\begin{align}\label{4.3}
c_{\varsigma,\alpha}f_{m}^{\frac{2+\alpha}{1+\alpha}}(s)-s<0.~
\end{align}
In view of~\eqref{4.3},~we have~$c_{\varsigma,\alpha}f_{m}^{\frac{2+\alpha}{1+\alpha}}(s)-s<-2\ln(s)$~for~$s\in[0,1)$.~Then there is no~$a_{m}$~in~$[0,1)$~such that
\begin{align*}
c_{\varsigma,\alpha}f_{m}^{\frac{2+\alpha}{1+\alpha}}(a_{m})-a_{m}=-2\ln(a_{m}).
\end{align*}
Step~2:~$a_{m}$~exists in~$[1,+\infty)$.~Suppose that there is no~$a_{m}$~in~$[1,+\infty)$.~Next,~in the case
\begin{align}\label{4.4}
c_{\varsigma,\alpha}f_{m}^{\frac{2+\alpha}{1+\alpha}}(s)-s>-2\ln(s)~\text{on}~[1,\infty).~
\end{align}
Choosing~$s=1$~,~$-2\ln(s)=0$,~we conclude that it contradicts~\eqref{4.4}~by~\eqref{4.3}.~Therefore,~we have~$c_{\varsigma,\alpha}f_{m}^{\frac{2+\alpha}{1+\alpha}}(s)-s\leq-2\ln(s)$.~
By assumptions,
\begin{align*}
c_{\varsigma,\alpha}f_{m}^{\frac{2+\alpha}{1+\alpha}}(s)-s<-2\ln(s)~\text{on}~[1,\infty).~
\end{align*}
In view of~$\varsigma>0$,~$\alpha>-1$,~we have~$c_{\varsigma,\alpha}>1$.~Therefore,~
\begin{align*}
e^{f_{m}^{\frac{2+\alpha}{1+\alpha}}(s)-s}<\frac{1}{s^{2}}.
\end{align*}
We select the dominating function
\begin{align*}
g(s)=\begin{cases}
1&\text{in(0,1)},\\\frac{1}{s^{2}}&\text{in}[1,+\infty).\\
\end{cases}
\end{align*}
It is easy to verify that~$g(s)$~is a summable function on~$[0,\infty)$.~We conclude that~$J(\tilde{f}_{m})\rightarrow J(0)$~by the dominated convergence theorem.~Therefore,~it contradicts the assumptions of Lemma~\ref{lemma 4.4}.~

Step~3:~$a_{m}\rightarrow+\infty$~as~$m\rightarrow+\infty$.~By letting~$G$~be an arbitrarily large number~and~$m_{0}$~be a positive real number,~it suffices to prove that for any~$m\geq m_{0}$,~one has~$a_{m}\geq G$.~First,~we choose a small~$\eta$~such that
\begin{align}\label{4.5}
{c_{\varsigma,\alpha}}^{\frac{2+\alpha}{1+\alpha}}\eta^{\frac{1}{1+\alpha}}s<s-2\ln(s),~\text{for any}~s\in[0,G).
\end{align}
Therefore,~from~\eqref{4.2},~
\begin{align*}
\displaystyle{c_{\varsigma,\alpha}f_{m}(s)\leq c_{\varsigma,\alpha}{\left(\int_{0}^{\infty}{|f'_{m}|}^{2+\alpha}ds\right)}^{\frac{1}{2+\alpha}}s^{\frac{1+\alpha}{2+\alpha}}}.
\end{align*}
By Definition~\ref{definition 3.2},~we have
\begin{align*}
\displaystyle{c_{\varsigma,\alpha}^{\frac{2+\alpha}{1+\alpha}}{f_{m}^{\frac{2+\alpha}{1+\alpha}}(s)}\leq {c_{\varsigma,\alpha}}^{\frac{2+\alpha}{1+\alpha}}{\eta}^{\frac{1}{1+\alpha}}s}.
\end{align*}
Since~$c_{\varsigma,\alpha}>1$~and~\eqref{4.5},~we obtain
\begin{align*}
\displaystyle{c_{\varsigma,\alpha}{f_{m}^{\frac{2+\alpha}{1+\alpha}}(s)}}\leq {c_{\varsigma,\alpha}}^{\frac{2+\alpha}{1+\alpha}}{\eta}^{\frac{1}{1+\alpha}}s<s-2\ln s.
\end{align*}
Therefore,~\eqref{4.1}~is not true,~which imply that for~$m\geq m_{0}$,~$a_{m}\geq G$.
\end{proof}

\begin{lemma}\label{lemma 4.5}
Let~$\alpha>0$,~$\beta>-1$,~$\frac{1+\varsigma}{c_{\varsigma,\alpha}}<1$~and~$c_{0} $~be constants.~Then for any~${\|f_{m}\|}_{D_{0,\alpha}^{1,2+\alpha}(B^{+})}\leq \frac{1}{c_{\varsigma,\alpha}}$,~we have
\begin{equation*}
J^{\delta}_{\alpha,\beta}(0)\leq\frac{c_{\beta}}{{(2+\beta)}^{2}c_{0}m_{\beta}(B^{+})}(1+e).
\end{equation*}
\end{lemma}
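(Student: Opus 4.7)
The plan is to push the problem from $B^{+}$ onto the half line via the logarithmic substitution already used in \eqref{3.4}, apply Lemma~\ref{lemma 4.4} to produce a cut-off $a_m\to\infty$, and then estimate the resulting integral on three sub-intervals so that the constants $1$ and $e$ drop out naturally. Write $|x|=e^{-s/(2+\beta)}$ and $f_m(s)=(2+\beta)\bigl(c_\alpha/(2+\beta)\bigr)^{1/(2+\alpha)}\tilde f_m(x)$. Under this change of variable the assumption $\|\tilde f_m\|_{D_{0,\alpha}^{1,2+\alpha}(B^{+})}\le 1/c_{\varsigma,\alpha}$ is equivalent to $c_{\varsigma,\alpha}\bigl(\int_0^\infty|f_m'|^{2+\alpha}\,ds\bigr)^{1/(2+\alpha)}\le 1$, the concentration $|\nabla\tilde f_m|^{2+\alpha}t^\alpha\rightharpoonup\delta_0$ becomes $\int_0^A|f_m'|^{2+\alpha}\,ds\to 0$ for every fixed $A>0$, and \eqref{3.4} reads $J(\tilde f_m)=\frac{c_\beta}{(2+\beta)^2 c_0 m_\beta(B^{+})}I(f_m)$ with $I(f_m):=\int_0^\infty\bigl(e^{|f_m|^{(2+\alpha)/(1+\alpha)}}-1\bigr)e^{-s}\,ds$. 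Thus the lemma is equivalent to showing $\limsup_{m\to\infty}I(f_m)\le 1+e$.

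Next, invoke Lemma~\ref{lemma 4.4} to obtain $a_m\in[1,\infty)$ with $c_{\varsigma,\alpha} f_m^{(2+\alpha)/(1+\alpha)}(a_m)-a_m=-2\ln a_m$ and $a_m\to\infty$. Since $a_m$ is the \emph{first} crossing in $[1,\infty)$ and \eqref{4.3} gives $c_{\varsigma,\alpha} f_m^{(2+\alpha)/(1+\alpha)}(1)-1<0=-2\ln 1$, continuity yields $c_{\varsigma,\alpha} f_m^{(2+\alpha)/(1+\alpha)}(s)\le s-2\ln s$ throughout $[1,a_m]$; dividing by $c_{\varsigma,\alpha}>1$ and using $s-2\ln s\ge 0$ for $s>0$ further gives $f_m^{(2+\alpha)/(1+\alpha)}(s)-s\le -2\ln s$ on $[1,a_m]$. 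Now split $I(f_m)=I_a+I_b+I_c$ with $I_a=\int_0^1$, $I_b=\int_1^{a_m}$, $I_c=\int_{a_m}^\infty$, and bound each piece. On $[0,1]$, \eqref{4.3} forces $f_m^{(2+\alpha)/(1+\alpha)}(s)\le s\le 1$, so $(e^{f_m^{(2+\alpha)/(1+\alpha)}}-1)e^{-s}\le e^{f_m^{(2+\alpha)/(1+\alpha)}-s}\le e^{1}=e$ and $I_a\le e$. On $[1,a_m]$, the sharpened inequality gives $e^{f_m^{(2+\alpha)/(1+\alpha)}-s}\le s^{-2}$, whence $I_b\le\int_1^{a_m}s^{-2}\,ds\le 1$. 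For the tail $I_c$ I would invoke Lemma~\ref{lemma 4.3} with $\phi=f_m$ and $\delta=1/c_{\varsigma,\alpha}$ (the standing hypothesis $(1+\varsigma)/c_{\varsigma,\alpha}<1$ is precisely the admissibility condition $1-(1+\varsigma)\delta>0$), selecting the free parameters $c_1=4/(1+\alpha)$ and $a=(1+\alpha)a_m$ so that both the lower integration limit $c_1^2(1+\alpha)a/16$ and the evaluation point $c_1 a/4$ coincide with $a_m$; the defining relation for $a_m$ then collapses the first two terms of the Lemma~\ref{lemma 4.3} exponent to $-2\ln a_m$ and, in concert with the concentration of $|f_m'|^{2+\alpha}\,ds$ near $s=\infty$, forces $I_c\to 0$. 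Summing the three bounds yields $\limsup_m I(f_m)\le e+1+0=1+e$.

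The hard part will be the tail $I_c$, because the additional term $\frac{c_1\delta}{1-(1+\varsigma)\delta}\cdot\frac{f_m(c_1 a/4)}{4}$ in the Lemma~\ref{lemma 4.3} exponent, once $c_1 a/4=a_m$, involves $f_m(a_m)$, which by \eqref{4.2} is only bounded by $a_m^{(1+\alpha)/(2+\alpha)}/c_{\varsigma,\alpha}$ and so a priori grows without bound. Balancing this growth against the $-2\ln a_m$ extracted from the leading terms is where the quantitative form of the concentration hypothesis and the gap $1-(1+\varsigma)/c_{\varsigma,\alpha}>0$ must be used with care. As a robust fallback should the Lemma~\ref{lemma 4.3} bookkeeping prove delicate, \eqref{4.3} alone yields the pointwise estimate $f_m^{(2+\alpha)/(1+\alpha)}(s)-s\le -\kappa s$ with $\kappa:=1-c_{\varsigma,\alpha}^{-(2+\alpha)/(1+\alpha)}>0$ valid for every $s\ge 0$, so that $I_c\le e^{-\kappa a_m}/\kappa\to 0$ immediately; this cruder estimate already closes the argument.
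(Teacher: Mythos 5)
Your argument is correct, but it reaches the bound by a genuinely different route than the paper, and the difference is worth recording. The paper splits the half-line integral at $\tfrac{c_{1}(1+\alpha)a_{m}}{4}$, shows the head tends to $1$ by combining the uniform convergence $f_{m}\to0$ on compact sets (concentration) with the bound $e^{f_{m}^{(2+\alpha)/(1+\alpha)}(s)-s}\le s^{-2}$ on $[A,a_{m}]$, and then controls the tail by Lemma~\ref{lemma 4.3} applied with the \emph{variable} parameter $\delta=\delta_{m}=c_{\varsigma,\alpha}^{1+\alpha}\int_{a_{m}}^{\infty}|f_{m}'|^{2+\alpha}ds\to0$, followed by the L'Hopital computation showing $\lim_{m}k_{m}\le0$; this yields the slightly sharper constant $e$. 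You instead cut at $1$ and $a_{m}$, accept the crude estimates $I_{a}\le e$ and $I_{b}\le\int_{1}^{a_{m}}s^{-2}ds\le1$ (your first-crossing argument for $f_{m}^{(2+\alpha)/(1+\alpha)}(s)-s\le-2\ln s$ on $[1,a_{m}]$ is the same fact the paper uses), and kill the tail with the subcritical pointwise bound $f_{m}^{(2+\alpha)/(1+\alpha)}(s)\le c_{\varsigma,\alpha}^{-(2+\alpha)/(1+\alpha)}s$, which follows from \eqref{4.2} and the norm gap ${\|f_{m}'\|}_{L^{2+\alpha}}\le1/c_{\varsigma,\alpha}<1$, so that $I_{c}\le e^{-\kappa a_{m}}/\kappa\to0$ using only $a_{m}\to\infty$ from Lemma~\ref{lemma 4.4}. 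This fallback is valid and gives exactly the stated constant $1+e$, bypassing Lemma~\ref{lemma 4.3} and the entire $k_{m}$ asymptotics; what it costs is the sharper bound $e$ and any hope of extending to the critical norm $1$, since it leans entirely on the strict gap $c_{\varsigma,\alpha}>1$ rather than on the smallness of the tail energy. Be aware, though, that your \emph{primary} plan for $I_{c}$ -- Lemma~\ref{lemma 4.3} with the fixed admissible value $\delta=1/c_{\varsigma,\alpha}$ -- would not close as sketched: the residual exponent term $\frac{c_{1}\delta}{1-(1+\varsigma)\delta}\cdot\frac{f_{m}(a_{m})}{4}$ then grows like $a_{m}^{(1+\alpha)/(2+\alpha)}$ and swamps the $-2\ln a_{m}$ gained from \eqref{4.1}; the paper avoids this precisely by taking $\delta=\delta_{m}\to0$. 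Also state explicitly the trivial case excluded by Lemma~\ref{lemma 4.4}: if $J(\tilde f_{m})\to J(0)=0$ no $a_{m}$ is needed and the bound is immediate.
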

\begin{proof}
Because~$f_{m}(s)$~is nonnegative,~we obtain
\begin{align*}
\displaystyle{\int_{0}^{\frac{c_{1}(1+\alpha)a_{m}}{4}}e^{f_{m}^{\frac{2+\alpha}{1+\alpha}}(s)-s}}ds\displaystyle{\geq\int_{0}^{\frac{c_{1}(1+\alpha)a_{m}}{4}}e^{-s}ds}\displaystyle{=1-e^{\frac{c_{1}(1+\alpha)a_{m}}{4}}}.
\end{align*}
Then,~$\displaystyle{\int_{0}^{\frac{c_{1}(1+\alpha)a_{m}}{4}}e^{f_{m}^{\frac{2+\alpha}{1+\alpha}}-s}(s)\geq1}$,~as~$m\rightarrow\infty$.~Following Definitions~\ref{definition 3.2},~\eqref{3.1}~and Lemma~\ref{lemma 3.3},~we notice that~$f_{m}\rightarrow0$~uniformly on compact subsets of~$\mathbb{R^{+}}$.~Therefore,~by definition of limit,~we have~for each~$A,~\varsigma>0$~,$f_{m}^{\frac{2+\alpha}{1+\alpha}}(s)\leq\varsigma$,~where~$s\leq A$.~Since~$f_{m}^{\frac{2+\alpha}{1+\alpha}}(s)\leq s-2\ln(s)$~for all~$s\leq a_{m}$,~we obtain the following for some sufficiently small~$\varsigma$:~
\begin{align*}
&\quad\ \displaystyle{\int_{0}^{\frac{c_{1}(1+\alpha)a_{m}}{4}}e^{f_{m}^{\frac{2+\alpha}{1+\alpha}}(s)-s}ds}&\\
&=\displaystyle{\int_{0}^{A}e^{f_{m}^{\frac{2+\alpha}{1+\alpha}}(s)-s}ds+\int_{A}^{\frac{c_{1}(1+\alpha)a_{m}}{4}}e^{f_{m}^{\frac{2+\alpha}{1+\alpha}}(s)-s}ds}\\
&\displaystyle{\ \leq e^{\varsigma}\left(1-e^{-A}\right)+\int_{A}^{\frac{c_{1}(1+\alpha)a_{m}}{4}}s^{-2}ds}\\
&=e^{\varsigma}\left(1-e^{-A}\right)+\left(\frac{1}{A}-\frac{4}{c_{1}(1+\alpha)a_{m}}\right).
\end{align*}
By choosing a sufficiently large~$A$,~we obtain
\begin{align*}
\mathop{\lim}\limits_{m\rightarrow\infty}e^{\varsigma}\left(1-e^{-A}\right)+\left(\frac{1}{A}-\frac{4}{c_{1}(1+\alpha)a_{m}}\right)=1.
\end{align*}
In conclusion,~
\begin{align}\label{4.6}
\lim\limits_{m\rightarrow\infty}\displaystyle{\int_{0}^{\frac{c_{1}(1+\alpha)a_{m}}{4}}e^{f_{m}(s)^{\frac{2+\alpha}{1+\alpha}}-s}ds=1}.
\end{align}
Therefore,~
\begin{align*}
\displaystyle{\lim\limits_{m\rightarrow\infty}\frac{c_{\beta}}{{(2+\beta)}^{2}c_{0}m_{\beta}(B^{+})}\int_{0}^{a_{m}}e^{f_{m}^{\frac{2+\alpha}{1+\alpha}}(s)-s}dt}\leq\frac{c_{\beta}}{{(2+\beta)}^{2}c_{0}m_{\beta}(B^{+})}.
\end{align*}
By using~\eqref{4.2},~we have
\begin{align*}
\displaystyle{f_{m}(s)\leq{\left(\int_{0}^{s}{f_{m}'(\tau)}^{2+\alpha}d\tau\right)}^{\frac{1}{2+\alpha}}s^{\frac{1+\alpha}{2+\alpha}}}.
\end{align*}
Hence
\begin{align}\label{4.7}
\displaystyle{f_{m}^{\frac{2+\alpha}{1+\alpha}}(s)\leq\left(\int_{0}^{s}{f'_{m}(\tau)}^{2+\alpha}d\tau\right)^{\frac{1}{1+\alpha}}}s.
\end{align}
Let~$\delta_{m}=c_{\varsigma,\alpha}^{1+\alpha}\displaystyle{\int_{a_{m}}^{\infty}|f_{m}'(s)|^{2+\alpha}ds}$.~From the constraint condition and~\eqref{4.7},~we have
\begin{align*}
&\quad\quad\quad\quad\quad\quad\quad\delta_{m}\leq\displaystyle{1-c_{\varsigma,\alpha}^{1+\alpha}\int_{0}^{a_{m}}|f'_{m}|^{2+\alpha}ds}&\\
&\quad\quad\quad\quad\ \quad\quad\quad\quad\displaystyle{\leq1-c_{\varsigma,\alpha}^{1+\alpha}\left(\frac{f_{m}^{\frac{2+\alpha}{1+\alpha}}(a_{m})}{a_{m}}\right)^{1+\alpha}}\\
&\quad\quad\quad\quad\ \quad\quad\quad\quad\displaystyle{=1-\left(\frac{c_{\varsigma,\alpha}f_{m}^{\frac{2+\alpha}{1+\alpha}}(a_{m})}{a_{m}}\right)^{1+\alpha}}
\end{align*}
\begin{align*}
&\ \quad\displaystyle{=1-\left(\frac{a_{m}-2\ln(a_{m})}{a_{m}}\right)^{1+\alpha}}\\
&\ \quad\displaystyle{=1-\left(1-\frac{2\ln(a_{m})}{a_{m}}\right)^{1+\alpha}}.
\end{align*}
According to~Lemma~\ref{lemma 4.3},~
\begin{equation*}
\sup\limits_{\phi\in\tilde{\Lambda}_{\delta}}\displaystyle{\int_{\frac{c_{1}^{2}(1+\alpha)a}{16}}^{\infty}e^{\phi(s)^{\frac{2+\alpha}{1+\alpha}}-s}ds}\leq \frac{1}{(1-(1+\varsigma){\delta})}e^{c_{\varsigma,\alpha}\frac{c_{1}(1+\alpha)}{4}\phi^{\frac{2+\alpha}{1+\alpha}}(\frac{c_{1}a}{4})-\frac{c_{1}^{2}(1+\alpha)a}{16}+\frac{c_{1}{\delta}}{1-(1+\varsigma){\delta}}\frac{\phi(\frac{c_{1}a}{4})}{4}+1}.
\end{equation*}
We select ~$\phi=f_{m},~{\delta}=\delta_{m},~\frac{c_{1}a}{4}=a_{m}$,~$k_{m}={c_{\varsigma,\alpha}\frac{c_{1}(1+\alpha)}{4}{f_{m}}^{\frac{2+\alpha}{1+\alpha}}(a_{m})-\frac{1+\alpha}{4}a_{m}c_{1}}+\frac{c_{1}\delta_{m}}{1-(1+\varsigma){\delta_{m}}}\frac{{f_{m}}(a_{m})}{4}$.~Then
\begin{align*}\quad\quad
&\quad\ \displaystyle{\int_{\frac{c_{1}(1+\alpha)a_{m}}{4}}^{\infty}e^{{f_{m}}^{\frac{2+\alpha}{1+\alpha}}-s}ds}&\\
&=\displaystyle{\int_{\frac{c_{1}(1+\alpha)a_{m}}{4}}^{\infty}e^{{f_{m}}^{\frac{2+\alpha}{1+\alpha}}-s}ds}\\
&\leq\frac{1}{(1-(1+\varsigma){\delta_{m}})}e^{c_{\varsigma,\alpha}\frac{c_{1}(1+\alpha)}{4}{f_{m}}^{\frac{2+\alpha}{1+\alpha}}(a_{m})-\frac{1+\alpha}{4}a_{m}c_{1}}\\
&^{+\frac{c_{1}\delta_{m}}{1-(1+\varsigma){\delta_{m}}}\frac{{f_{m}}(a_{m})}{4}+1}\\
&=\frac{1}{(1-(1+\varsigma)\delta_{m})}e^{k_{m}+1}.
\end{align*}
We only need to calculate~$\lim\limits_{m\rightarrow\infty}k_{m}$.~\\

Since~$\delta_{m}\leq1-{\left(1-\frac{2\ln(a_{m})}{a_{m}}\right)}^{1+\alpha}$~as~$a_{m}\rightarrow\infty$,~$\lim\limits_{m\rightarrow\infty}\delta_{m}=0$.~Therefore,~
\begin{align}\label{4.8}
\displaystyle{\lim\limits_{m\rightarrow\infty}\int_{\frac{c_{1}(1+\alpha)a_{m}}{4}}^{\infty}e^{f_{m}^{\frac{2+\alpha}{1+\alpha}}(s)-s}ds}\leq\lim\limits_{m\rightarrow\infty}\frac{1}{(1-(1+\varsigma)\delta_{m})}e^{k_{m}+1}=e^{\lim\limits_{m\rightarrow\infty}k_{m}+1}.~
\end{align}
According to~\eqref{4.1},~
\begin{align*}
k_{m}=-\frac{c_{1}(1+\alpha)}{2}\ln(a_{m})+\frac{c_{1}\delta_{m}}{1-(1+\varsigma)\delta_{m}}\frac{f_{m}(a_{m})}{4}.
\end{align*}
Similarly,~from~\eqref{4.1}~and the condition in the lemma that~$c_{\varsigma,\alpha}>1$,~we obtain
\begin{align*}
f_{m}(a_{m})\leq{(a_{m}-2\ln a_{m})}^{\frac{1+\alpha}{2+\alpha}}\leq {a_{m}}^{\frac{1+\alpha}{2+\alpha}}\leq a_{m}.
\end{align*}
Therefore,~as~$a_{m}\rightarrow\infty$,~
\begin{align*}
k_{m}\leq -\frac{c_{1}(1+\alpha)}{2}\ln (a_{m})+\frac{c_{1}\delta_{m}}{1-(1+\varsigma)\delta_{m}}\frac{{a_{m}}}{4}.
\end{align*}
Let~$r_{m}=1-(1+\varsigma)\delta_{m}$,~we have
\begin{align*}
k_{m}\leq\frac{1}{r_{m}}\left(-\frac{c_{1}(1+\alpha)}{2}\ln(a_{m})\right)r_{m}+\frac{c_{1}\delta_{m}a_{m}}{4r_{m}}.
\end{align*}
By denoting variable~$a_{m}$~as~$x$,~the numerator can be easily calculated as follows:
\begin{align*}
&\quad\quad\quad\quad\ -\frac{c_{1}(1+\alpha)}{2}\ln(x)r_{m}+\frac{c_{1}\delta_{m}}{4}x\\
&\quad\quad\quad\leq\left(-\frac{c_{1}(1+\alpha)}{2}\ln(x)\right)(1-(1+\varsigma)\delta_{m})+\frac{1}{4}c_{1}x\left(1-{\left(1-\frac{2\ln x}{x}\right)}^{1+\alpha}\right)\\
&\quad\quad\quad=-\frac{c_{1}(1+\alpha)}{2}\ln x+\frac{c_{1}(1+\alpha)}{2}\ln x\cdot(1+\varsigma)\delta_{m}+\frac{c_{1}}{4}\left(1-{\left(1-\frac{2\ln x}{x}\right)}^{1+\alpha}\right)x\\
&\quad\quad\quad=-\frac{c_{1}(1+\alpha)}{2}\ln x+\frac{c_{1}}{4}\left(1-{\left(1-\frac{2\ln x}{x}\right)}^{1+\alpha}\right)x+\frac{c_{1}(1+\alpha)(1+\varsigma)}{2}\ln x\left(1-{\left(1-\frac{2\ln x}{x}\right)}^{1+\alpha}\right)\\
&\quad\quad\quad=-\frac{c_{1}(1+\alpha)}{2}\ln x+\frac{c_{1}}{4}\left(1-{\left(1-\frac{2\ln x}{x}\right)}^{1+\alpha}\right)x+\frac{\frac{c_{1}(1+\alpha)}{2}\left(1-{\left(1-\frac{2\ln x}{x}\right)}^{\frac{1}{1+\alpha}}\right)}{\left(\ln x\cdot(1+\varsigma)\right)^{-1}}.
\end{align*}
Since
\begin{align*}
{({(\ln x\cdot(1+\varsigma))^{-1}})}'=\frac{-(1+\varsigma)\frac{1}{x}}{{(\ln x\cdot(1+\varsigma))}^{2}}
\end{align*}
and
\begin{align*}
{\left(\frac{c_{1}(1+\alpha)}{2}\left(1-{\left(1-\frac{2\ln x}{x}\right)}^{\frac{1}{1+\alpha}}\right)\right)}=\frac{c_{1}}{2}{\left(1-\frac{2\ln x}{x}\right)}^{\frac{1}{1+\alpha}-1}\frac{2-2\ln x}{x^{2}},
\end{align*}
then~for~$\frac{\frac{c_{1}(1+\alpha)}{2}\left(1-{\left(1-\frac{2\ln x}{x}\right)}^{\frac{1}{1+\alpha}}\right)}{(\ln x\cdot(1+\varsigma))^{-1}}$,~after using~L'Hopital's~rule,~
\begin{align*}
&\quad\ \frac{\frac{c_{1}}{2}{(1-\frac{2\ln x}{x})}^{\frac{1}{1+\alpha}-1}\frac{2-2\ln x}{x^{2}}}{\frac{-(1+\varsigma)\frac{1}{x}}{{(\ln x(1+\varsigma))}^{2}}}\\
&=\frac{\frac{c_{1}}{2}{(1-\frac{2\ln x}{x})}^{\frac{1}{1+\alpha}-1}\frac{1}{x}(2-2\ln x){(\ln x(1+\varsigma))}^{2}}{-(1+\varsigma)}\\
&=-\frac{1}{1+\varsigma}\frac{c_{1}}{2}{\left(1-\frac{2\ln x}{x}\right)}^{\frac{1}{1+\alpha}-1}\frac{1}{x}(2-2\ln x){(\ln x\cdot(1+\varsigma))}^{2}.
\end{align*}
Therefore~$\frac{\frac{c_{1}(1+\alpha)}{2}\left(1-{\left(1-\frac{2\ln x}{x}\right)}^{\frac{1}{1+\alpha}}\right)}{(\ln x\cdot(1+\varsigma))^{-1}}=0$~as~$x\rightarrow\infty$.~By simple calculation,~we have
\begin{align*}
\quad-\frac{c_{1}(1+\alpha)}{2}\ln x+\frac{{c_{1}}}{4}\left(1-\left(1-\frac{2\ln x}{x}\right)^{1+\alpha}\right)x=\frac{-\frac{c_{1}(1+\alpha)}{2}\ln x\cdot x^{-1}+\frac{c_{1}}{4}\left(1-(1-\frac{2\ln x}{x})^{1+\alpha}\right)}{x^{-1}}.
\end{align*}
After using~L'Hopital's~rule,~
\begin{align*}
&\quad\ \frac{-\frac{c_{1}(1+\alpha)}{2}\ln x\cdot x^{-1}+\frac{c_{1}}{4}\left(1-(1-\frac{2\ln x}{x})^{1+\alpha}\right)}{x^{-1}}&\\
&=-\frac{c_{1}(1+\alpha)}{2}(\ln x-1)-\frac{c_{1}(1+\alpha)}{4}{\left(1-\frac{2\ln x}{x}\right)}^{\alpha}(2-2\ln x)\\
&=\frac{c_{1}(1+\alpha)}{2}(1-\ln x)\left(1-{\left(1-\frac{2\ln x}{x}\right)}^{\alpha}\right).
\end{align*}
Let~$\mu=\alpha$,~$y=1-\frac{2\ln x}{x}$.~Then
\begin{align*}
&\quad\ \frac{c_{1}(1+\alpha)}{2}(1-\ln x)\left(1-{\left(1-\frac{2\ln x}{x}\right)}^{\alpha}\right)&\\
&\geq\frac{c_{1}(1+\alpha)}{2}(1-\ln x)\left(1-\left(1-(1+\alpha)\frac{2\ln x}{x}\right)\right)\\
&=c_{1}{(1+\alpha)}^{2}(1-\ln x)\frac{\ln x}{x}.
\end{align*}
To see the inequality,~we must use the following inequality that can be easily proven using the Taylor expansion:
\begin{align*}
{(1-y)}^{\mu}\geq1-(1+\mu)y,
\end{align*}
for any~$\mu>0$,~$y>0$~small enough.

Since~$c_{1}{(1+\alpha)}^{2}(1-\ln x)\frac{\ln x}{x}=0$~as~$x\rightarrow\infty$,~we have~$-\frac{c_{1}}{2}(1+\alpha)(\ln x-1)-\frac{c_{1}(1+\alpha)}{4}{(1-\frac{2\ln x}{x})}^{\alpha}(2-2\ln x)\geq0$~as~$x\rightarrow\infty$.~Therefore,~$-\frac{c_{1}}{2}(1+\alpha)(\ln x-1)-\frac{c_{1}(1+\alpha)}{4}{(1-\frac{2\ln x}{x})}^{\alpha}(2-2\ln x)=0$~as~$x\rightarrow\infty$.~

In conclusion,~we obtain
\begin{align*}
&\quad\ \lim\limits_{m\rightarrow\infty}J(\tilde{f}_{m})\\
&=\mathop{\lim}\limits_{m\rightarrow\infty}\frac{c_{\beta}}{{(2+\beta)}^{2}c_{0}m_{\beta}(B^{+})}\displaystyle{\int_{0}^{\infty}\left(e^{{|f_{m}|}^{\frac{2+\alpha}{1+\alpha}}}-1\right)e^{-s}}ds\\
&
=\mathop{\lim}\limits_{m\rightarrow\infty}\frac{c_{\beta}}{{(2+\beta)}^{2}c_{0}m_{\beta}(B^{+})}\left(\displaystyle{\int_{0}^{\frac{c_{1}(1+\alpha)a_{m}}{4}}e^{{{|f_{m}|}^{\frac{2+\alpha}{1+\alpha}}}-s}ds}+\displaystyle{\int_{\frac{c_{1}(1+\alpha)a_{m}}{4}}^{\infty}e^{{{|f_{m}|}^{\frac{2+\alpha}{1+\alpha}}}-s}ds-1}\right)\\
&\leq\frac{c_{\beta}}{{(2+\beta)}^{2}c_{0}m_{\beta}(B^{+})}\left(1+e^{\mathop{\lim}\limits_{m\rightarrow\infty}k_{m}+1}-1\right)\\
&\leq\frac{c_{\beta}}{{(2+\beta)}^{2}c_{0}m_{\beta}(B^{+})}e
\end{align*}
from ~\eqref{3.4},~\eqref{4.6}~and~\eqref{4.8}.
\end{proof}
\begin{proof}[Proof of Theorem \ref{theorem1.2}]
We want to prove this theorem by a contradiction.~We find a function that is not in a concentrated sequence such that its functional~$J$~is larger than the upper bound.~Then it indicate that the maximum sequence containing this function is not concentrated.~Finally, from Lemma~\ref{lemma 3.3},~we conclude that there exists extremal functions.

Let~$\Lambda_{\delta}=\left\{\phi\in C^{1}(0,\infty)|\phi(0)=0,~\displaystyle{\int_{0}^{+\infty}{|\phi'|}^{2}ds\leq\delta}\right\}$.~Motivated by~\cite{R2016}~,we define a function~$f\in\Lambda_{1}$~as follows:
\begin{align*}
f(s)=\begin{cases}
\text{$\frac{s}{2},$}&\text{$0\leq s\leq 2,$}\\
\text{$(s-1)^{\frac{1}{2}},$}&\text{$2\leq s\leq e^{2}+1,$}\\
\text{$e,$}&\text{$s\geq e^{2}+1$}.
\end{cases}
\end{align*}
It was verified in~\cite{C1986}~that
\begin{align*}
\displaystyle{\int_{0}^{\infty}e^{f^{2}(s)-s}ds>\frac{2.723}{e}+e>e}.
\end{align*}
By selecting~$\phi_{0}(s)=f^{2\frac{1+\alpha}{2+\alpha}}$,~we obtain
\begin{align*}
\displaystyle{\int_{0}^{+\infty}e^{{\phi_{0}}^{\frac{2+\alpha}{1+\alpha}}-s}ds}>\frac{2.723}{e}+e>e.
\end{align*}
~It remains to prove~$\phi_{0}(s)\in\tilde{\Lambda}_{\frac{1}{c_{\varsigma,\alpha}}}$.~Since
\begin{align*}
&\Gamma(\phi_{0})={\left(\displaystyle{\int_{B^{+}}{|\nabla u|}^{2+\alpha}t^{\alpha}dxdt}\right)}^{\frac{1}{2+\alpha}}&\\
&\quad\quad\ ={\left(\displaystyle{\int_{0}^{\infty}{|\phi_{0}'(s)|}^{2+\alpha}ds}\right)}^{\frac{1}{2+\alpha}}\\
&\quad\quad\ ={\left(\displaystyle{\int_{0}^{+\infty}{{\left(f^{\frac{2+2\alpha}{2+\alpha}}\right)}'}^{2+\alpha}ds}\right)}^{\frac{1}{2+\alpha}}\\
&\quad\quad\ ={\left({\left(\frac{2+2\alpha}{2+\alpha}\right)}^{2+\alpha}\left(\displaystyle{\int_{0}^{2}f^{\alpha}{f'}^{2+\alpha}ds+\int_{2}^{e^{2}+1}f^{\alpha}{f'}^{2+\alpha}ds}\right)\right)}^{\frac{1}{2+\alpha}}\\
&\quad\quad\ ={\left({\left(\frac{1}{2}\right)}^{1+\alpha}{\left(\frac{2+2\alpha}{2+\alpha}\right)}^{2+\alpha}\frac{1}{1+\alpha}+{\left(\frac{2+2\alpha}{2+\alpha}\right)}^{2+\alpha}{(\frac{1}{2})}^{1+\alpha}\right)}^{\frac{1}{2+\alpha}}\\
&\quad\quad\ ={\left({\left(\frac{1}{2}\right)}^{1+\alpha}{\left(\frac{2+2\alpha}{2+\alpha}\right)}^{2+\alpha}\frac{2+\alpha}{1+\alpha}\right)}^{\frac{1}{2+\alpha}}\\
&\quad\quad\ =\frac{2+2\alpha}{2+\alpha}{\left(\frac{2+\alpha}{1+\alpha}\right)}^{\frac{1}{2+\alpha}}{\left(\frac{1}{2}\right)}^{\frac{1+\alpha}{2+\alpha}}\\
&\quad\quad\ =2{\left(\frac{2+\alpha}{1+\alpha}\right)}^{-\frac{1+\alpha}{2+\alpha}}{\left(\frac{1}{2}\right)}^{\frac{1+\alpha}{2+\alpha}}\\
&\quad\quad\ =2{\left({\left(\frac{1+\alpha}{2+\alpha}\right)}^{1+\alpha}\right)}^{\frac{1}{2+\alpha}}{\left(\frac{1}{2}\right)}^{\frac{1+\alpha}{2+\alpha}}\\
&\quad\quad\ ={\left(\frac{1+\alpha}{2+\alpha}\right)}^{\frac{1+\alpha}{2+\alpha}}2^{\frac{1}{2+\alpha}}
\end{align*}
and condition in Theorem\ref{theorem1.2},~we have
\begin{align*}
\Gamma(\phi_{0})={\left(\displaystyle{\int_{B^{+}}{|\nabla u|}^{2+\alpha}t^{\alpha}dxdt}\right)}^{\frac{1}{2+\alpha}}={\left(\frac{1+\alpha}{2+\alpha}\right)}^{\frac{1+\alpha}{2+\alpha}}2^{\frac{1}{2+\alpha}}\leq\frac{1}{c_{\varsigma,\alpha}}.
\end{align*}
Consequently,~$\phi_{0}(s)\in\tilde{\Lambda}_{\frac{1}{c_{\varsigma,\alpha}}}$.~
\end{proof}
Finally,~we obtain the existence of a radial solution to the nonlinear Dirichlet$($mean field type$)$problem under the assumptions of Theorem~\ref{theorem1.2}~by using variational method:
\begin{align*}
\begin{cases}
\text{$-div({|\nabla u|}^{\alpha}t^{\alpha}\nabla u)=\frac{u^{\frac{1}{1+\alpha}}\left(e^{a_{\alpha,\beta}{|u|}^{\frac{2+\alpha}{1+\alpha}}}-1\right)t^{\beta}}{\displaystyle{\int_{B^{+}}\left(e^{a_{\alpha,\beta}{|u|}^{\frac{2+\alpha}{1+\alpha}}}-1\right) u^{\frac{1}{1+\alpha}}t^{\beta}dxdt}}$}&\text{$in~B^{+}$},\\
\text{$u=0$}&\text{$on~\partial B^{+}$},\\
\text{$u>0$}&\text{$on~B^{+}$}.
\end{cases}
\end{align*}

We establishe concentration-compactness principle and prove the existence of minimizers of a weighted Moser-Trudinger inequality in the two-dimensional upper half space under dynamic changes.~The primary technical challenge in this study is finding subtle variable substitution in Lemma~\ref{lemma 4.3}~and selecting the appropriate function substitution in Lemma~\ref{lemma 4.5}~to obtain the existence of minimizers.~
\begin{remark}
In this study,~only results on the two-dimensional upper half space are considered.~The existence of extremal functions of Moser-Trudinger type inequalities with monomial weights in $n$-dimensional space still needs further research.
\end{remark}
\subsection*{Data availability}
All data generated or analysed during this study are included in this published article.

\vskip 2mm \noindent Yubo Ni, \ \ \ {\small \tt terence$_{-}$ni@sina.com}\\
{ \em School of Mathematics and Statistic, Shaanxi Normal
University, Xi'an, 710119, China}

\end{document}